\documentclass[12pt]{article}
\usepackage[top=2.5cm,bottom=2.5cm,left=2.9cm,right=2.9cm]{geometry}
\usepackage{amssymb}
\usepackage{amsmath,amsthm}
\usepackage[latin1]{inputenc}
\usepackage{graphicx}
\usepackage{hyperref}
\usepackage{enumerate}
\usepackage{tikz}
\usepackage{tkz-graph}
\usepackage{mathrsfs}
\usepackage{verbatim}
\usepackage{upgreek}
 \usepackage{mathptmx}

\usepackage{upgreek}
\usepackage{mathptmx}
\usepackage{eucal}

\usepackage{colortbl}

\hypersetup{colorlinks=true}

\hypersetup{colorlinks=true, linkcolor=blue, citecolor=blue,urlcolor=blue}


\setlength{\parindent}{0.3in}
\newtheorem{remark}{Remark}[section]

\newtheorem{lemma}[remark]{Lemma}
\newtheorem{theorem}[remark]{Theorem}
\newtheorem{proposition}[remark]{Proposition}

\newtheorem{corollary}[remark]{Corollary}

\title{Double domination in lexicographic product graphs}

\author{A. Cabrera Mart\'inez$^{(1)}$, S. Cabrera Garc\'ia$^{(2)}$, J. A. Rodr\'{\i}guez-Vel\'{a}zquez$^{(1)}$\\
$^{(1)}${\small Universitat Rovira i Virgili }\\{\small Departament d'Enginyeria Inform\`atica i Matem\`atiques } \\  {\small Av. Pa\"{\i}sos
Catalans 26, 43007 Tarragona, Spain.} \\{\small
  abel.cabrera\@@urv.cat, juanalberto.rodriguez\@@urv.cat}\\
$^{(2)}${\small Universitat Polit\'ecnica de Valencia}\\{\small Departamento de Estad\'istica e Investigaci\'on Operativa Aplicadas y Calidad}\\{\small Camino de Vera s/n, 46022 Valencia, Spain.}\\ {\small suicabga\@@eio.upv.es}}

\date{ }
\begin{document}
\maketitle

\begin{abstract}
In a graph $G$, a vertex dominates itself and its neighbours. A subset $S\subseteq V(G)$ is said to be a double dominating set of $G$ if $S$ dominates every vertex of $G$ at least twice. The minimum cardinality among all double dominating sets of $G$ is the double domination number. In this article, we  obtain tight bounds  and closed formulas for the double domination number of lexicographic product graphs $G\circ H$ in terms 
 of invariants of the factor graphs $G$ and $H$.
\end{abstract}

{\it Keywords}:
Double domination; total domination; total Roman $\{2\}$-domination;  lexicographic product

\section{Introduction}

In a graph $G$, a vertex dominates itself and its neighbours.
A subset $S\subseteq V(G)$ is said to be a \emph{dominating set} of $G$ if $S$ dominates every vertex of $G$, while $S$ is said to be a \emph{double dominating set} of $G$ if $S$ dominates every vertex of $G$ at least twice. 
A subset $S\subseteq V(G)$ is said to be a \emph{total dominating set} of $G$ if every vertex $v\in V(G)$ is dominated by at least one vertex in $S\setminus \{v\}$. 
  The minimum cardinality among all  dominating sets of $G$ is the   \emph{domination number}, denoted by $\gamma(G)$. The \emph{double domination number}  and the \emph{total domination number} of $G$ are defined by analogy, and are denoted by $\gamma_{\times 2}(G)$ and $\gamma_t(G)$, respectively. The domination number and the total domination number have been extensively studied. For instance, we cite the following books  \cite{book1,book2,book-total-dom}. 
 The double domination number, which has been less studied, was introduced in \cite{haynes2000} by Harary and Haynes, and studied further in a number of works including \cite{chellali2006, chellali2005, Majid2019,  harant2005, Khelifi2012}.

Let $f: V(G) \rightarrow \{0,1,2\}$ be a function. For any $i\in \{0,1,2\}$ we define the subsets of vertices $V_i=\{v\in V(G): f(v)=i\}$ and we  identify $f$ with the three subsets of $V(G)$ induced by $f$. Thus, in order to emphasize the notation of these sets, we denote the function by $f(V_0, V_1, V_2)$. Given a set $X\subseteq V(G)$, we define $f(X)=\sum_{v\in X}f(v)$, and the \emph{weight} of $f$ is defined to be $\omega(f)=f(V(G))=|V_1|+2|V_2|$.

A function $f(V_0,V_1,V_2)$ is a \emph{total Roman dominating function} (TRDF) on a graph $G$ if $V_1\cup V_2$ is a total dominating set and $N(v)\cap V_2\ne \emptyset$ for every vertex $v\in V_0$, where $N(v)$ denotes the \emph{open neighbourhood} of $v$. 
This concept was introduced by Liu and Chang \cite{Liu2013}. For recent results on total Roman domination in graphs we cite \cite{AbdollahzadehAhangarHenningSamodivkinEtAl2016, TRDF2019, Cabrera2020, Dorota2018}.

A  function $f(V_0,V_1,V_2)$  is a \emph{total Roman $\{2\}$-dominating function} (TR2DF) if $V_1\cup V_2$ is a total dominating set and  $f(N(v))\geq 2$ for every vertex $v\in V_0$.  This concept was recently introduced in \cite{TR2DF2019}. Notice that $S\subseteq V(G)$ is a double dominating set of $G$ if and only if there exists a TR2DF   $f(V_0,V_1,V_2)$ such that $V_1=S$ and $V_2=\emptyset.$ 

The \emph{total Roman domination number}, denoted by $\gamma_{tR}(G)$, is the minimum weight among all TRDFs on $G$. By analogy, we define the  \emph{total Roman $\{2\}$-domination number}, which is denoted by $\gamma_{t\{R2\}}(G)$.

Notice that, by definition, $\gamma_{\times 2}(G)\ge \gamma_{t\{R2\}}(G)$. As an example of graph $G$  for which $\gamma_{\times 2}(G)> \gamma_{t\{R2\}}(G)$ we consider a star graph $K_{1,r}$ for $r\ge 3$. In this case,  $\gamma_{\times 2}(K_{1,r})=r+1>3= \gamma_{t\{R2\}}(K_{1,r})$.
We would point out that the problem of characterizing all graphs with $\gamma_{\times 2}(G)=\gamma_{t\{R2\}}(G)$ remains open. In this paper we show that the values of these two parameters coincide for any lexicographic product graph $G\circ H$ in which graph $G$ has no isolated vertices and graph $H$ is not trivial. Furthermore, we obtain tight bounds and  closed formulas  for $\gamma_{\times 2}(G\circ H)$ in terms of invariants of the factor graphs $G$ and $H$.

\subsection{Additional concepts, notation and tools}

All graphs considered in this paper are finite and undirected, without loops or multiple edges. As usual, the \emph{closed neighbourhood} of a vertex $v\in V(G)$ is denoted by $N[v]=N(v)\cup \{v\}$. We say that a vertex $v\in V(G)$ is a \emph{universal vertex} of $G$ if $N[v]=V(G)$. By analogy with the notation used for vertices, for a set $S \subseteq V(G)$, its \emph{open neighbourhood} is the set $N(S)=\cup_{v\in S} N(v)$, and its \emph{closed neighbourhood} is the set $N[S]= N(S)\cup S$. The subgraph induced by $S\subseteq V(G)$ will be denoted by  $\langle S\rangle$, while the graph obtained from $G$ by removing all the vertices in $S\subseteq V(G)$ (and all the edges incident with a vertex in $S$) will be denoted by $G-S$.

We will use the notation $K_n$, $K_{1,n-1}$, $C_n$, $N_n$,  $P_n$  and $K_{n,n-r}$ for complete graphs, star graphs, cycle graphs, empty graphs, path graphs and complete bipartite graphs of order $n$, respectively. A double star $S_{n_1,n_2}$ is the graph obtained by joining the center of two stars $K_{1,n_1}$ and $K_{1,n_2}$ with an edge.

Given two graphs $G$ and $H$, the \emph{lexicographic product} of $G$ and $H$ is the graph $G\circ H$ whose vertex set is $V(G\circ  H) = V(G)\times V(H)$ and $(u,v)(x,y)\in E(G \circ H)$ if and only if $ux \in E(G)$ or $u=x$ and $vy \in E(H)$. Notice that for any vertex $u\in V(G)$ the subgraph of $G\circ H$ induced by $\{u\}\times V(H)$ is isomorphic to $H$. For simplicity, we will denote this subgraph by $H_u$. For basic properties of lexicographic product  graphs we suggest the books \cite{Hammack2011, Imrich2000}. 
In particular,   we cite the following works on domination theory of lexicographic product graphs:   standard domination   \cite{MR3363260,Nowakowski1996,Zhang2011},   Roman domination  \cite{SUmenjak:2012:RDL:2263360.2264103},  total Roman domination \cite{Dorota2018}, weak Roman domination   \cite{Valveny2017},   rainbow domination   \cite{MR3057019},  $k$-rainbow independent domination \cite{MR3897459},
 super domination    \cite{Dettlaff-LemanskaRodrZuazua2017},  twin domination  \cite{MR3644816}, power domination \cite{MR2399365} and doubly connected domination   \cite{MR3200151}.
 
 For simplicity, for  any $(u,v) \in V(G)\times V(H)$ and any TR2DF $f$ on $G \circ H$ we write $N(u,v)$ and $f(u,v)$ instead of $N((u,v))$ and $f((u,v))$, respectively.

For the remainder of the paper, definitions will be introduced whenever a concept is needed.

Now we present some tools that will be very useful throughout the work. 

\begin{proposition}\label{prop-inequalities}{\rm\cite{ TR2DF2019}}
The following inequalities hold for any graph $G$ with no isolated vertex.
\begin{enumerate}[{\rm(i)}]
  \item $\gamma_t(G)\leq \gamma_{t\{R2\}}(G)\leq \gamma_{tR}(G)\leq 2\gamma_t(G)$.
  \item $ \gamma_{t\{R2\}}(G)\leq \gamma_{\times 2}(G)$.
\end{enumerate}
\end{proposition}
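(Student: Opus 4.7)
The plan is to verify each inequality by constructing explicit functions or sets that witness the required weight or size bound. All four inequalities follow from short, direct arguments: one simply passes between total dominating sets, TRDFs, TR2DFs and double dominating sets by either relabelling values or invoking the defining conditions.

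For part (i), I would prove the three inequalities in order. For $\gamma_t(G)\le \gamma_{t\{R2\}}(G)$, take any TR2DF $f(V_0,V_1,V_2)$ of minimum weight; since by definition $V_1\cup V_2$ is a total dominating set, $\gamma_t(G)\le |V_1|+|V_2|\le |V_1|+2|V_2|=\omega(f)=\gamma_{t\{R2\}}(G)$. For $\gamma_{t\{R2\}}(G)\le \gamma_{tR}(G)$, observe that any TRDF $f(V_0,V_1,V_2)$ is automatically a TR2DF: if $v\in V_0$ then $N(v)\cap V_2\ne\emptyset$ forces $f(N(v))\ge 2$. Taking $f$ to be a minimum TRDF yields the bound. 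For $\gamma_{tR}(G)\le 2\gamma_t(G)$, let $D$ be a minimum total dominating set and define $f$ by $f(v)=2$ for $v\in D$ and $f(v)=0$ otherwise; then $V_1\cup V_2=D$ is total dominating, and for every $v\notin D$ the set $N(v)\cap V_2=N(v)\cap D$ is nonempty because $D$ dominates $v$. Hence $f$ is a TRDF of weight $2|D|=2\gamma_t(G)$.

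For part (ii), I would start with a minimum double dominating set $S$ of $G$ and define the function $f(V_0,V_1,V_2)$ with $V_1=S$, $V_2=\emptyset$ and $V_0=V(G)\setminus S$. Every vertex $v\in V(G)$ satisfies $|N[v]\cap S|\ge 2$; in particular, each $v\in S$ has at least one neighbour in $S$ (the second element of $N[v]\cap S$ beyond $v$ itself), which makes $V_1\cup V_2=S$ a total dominating set. Moreover, if $v\in V_0$ then $|N(v)\cap S|\ge 2$, so $f(N(v))\ge 2$. Therefore $f$ is a TR2DF with $\omega(f)=|S|=\gamma_{\times 2}(G)$, proving $\gamma_{t\{R2\}}(G)\le \gamma_{\times 2}(G)$.

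There is no real obstacle here; every step is a matter of unwinding the definitions and choosing the right labelling. The only mild subtlety is ensuring the totality condition in part (ii), where one has to notice that the double domination inequality $|N[v]\cap S|\ge 2$ applied at a vertex $v\in S$ forces $v$ to have a neighbour inside $S$, which is precisely what total domination requires.
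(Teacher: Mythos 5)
Your proof is correct: all four inequalities are established by the standard definitional arguments (a TR2DF's $V_1\cup V_2$ is total dominating; a TRDF is a TR2DF since $N(v)\cap V_2\ne\emptyset$ gives $f(N(v))\ge 2$; assigning $2$ to a minimum total dominating set yields a TRDF; and a minimum double dominating set $S$ with $V_1=S$, $V_2=\emptyset$ yields a TR2DF, the totality condition following from $|N[v]\cap S|\ge 2$ at every vertex). The paper itself states this proposition without proof, citing \cite{TR2DF2019}, and your argument is precisely the expected one, so there is nothing to contrast.
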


A double dominating set of cardinality  $\gamma_{\times 2}(G)$ will be called a $\gamma_{\times 2}(G)$-set. A similar agreement will be assumed when referring to optimal sets (and functions) associated to other parameters used in the article.

\begin{theorem}\label{teo-x2=t}
If $\gamma_{\times 2}(G)=\gamma_t(G)$, then for any  $\gamma_{\times 2}(G)$-set $D$  there exists an  integer $k\geq 1$ such that $\langle D\rangle\cong \cup_{i=1}^kK_2$.
\end{theorem}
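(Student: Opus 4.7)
The plan is to argue by contradiction: assuming some component of $\langle D\rangle$ has three or more vertices, I will construct a total dominating set of size $|D|-1$, contradicting $|D|=\gamma_{\times 2}(G)=\gamma_t(G)$. A preliminary observation is that, because $D$ is double dominating, every $v\in D$ satisfies $|N[v]\cap D|\ge 2$ and hence has at least one neighbour in $D$. In particular, $\langle D\rangle$ has minimum degree at least $1$ and every component of $\langle D\rangle$ has order at least $2$, so the conclusion $\langle D\rangle\cong \cup_{i=1}^k K_2$ is equivalent to saying that every component of $\langle D\rangle$ has exactly two vertices.

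Suppose, for contradiction, that some component $C$ of $\langle D\rangle$ satisfies $|V(C)|\ge 3$. I would split into two cases depending on whether $C$ contains a vertex of degree $1$ in $\langle D\rangle$. If $C$ contains a leaf $\ell$ whose unique neighbour in $\langle D\rangle$ is $c$, then, since $|V(C)|\ge 3$ and $C$ is connected, $c$ must have degree at least $2$ in $\langle D\rangle$ (otherwise $\{c,\ell\}$ would be a component of order $2$). I then take $D':=D\setminus\{\ell\}$ and check that $D'$ totally dominates $G$: every vertex outside $D$ keeps at least one dominator because it had at least two in $D$; the vertex $\ell$ is dominated by $c\in D'$; the vertex $c$ retains a neighbour in $D'$ by its degree; and any other $u\in D$ is unaffected because $\ell$'s only neighbour in $D$ was $c$.

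If instead $C$ has minimum degree at least $2$, I would pick any $v\in V(C)$ and set $D':=D\setminus\{v\}$. Every $u\in V(C)\setminus\{v\}$ has at least two neighbours in $\langle D\rangle$ and so keeps at least one neighbour in $D'$; the removed vertex $v$ has at least two neighbours in $V(C)\subseteq D'$ and is therefore dominated; and vertices outside $D$ are handled exactly as in the first case. In both cases $D'$ is a total dominating set with $|D'|=|D|-1<\gamma_t(G)$, contradicting the hypothesis, so every component of $\langle D\rangle$ must have exactly two vertices.

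The main obstacle I anticipate is not any single calculation but the bookkeeping involved in choosing a vertex whose removal does not leave another vertex of $D$ without a neighbour in the reduced set; the case split on the existence of a leaf in $C$ is meant precisely to handle this issue, since the two regimes (leaf with a higher-degree anchor, versus $2$-regular or better) allow removal of different vertices while preserving total domination.
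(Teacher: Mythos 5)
Your proposal is correct and follows essentially the same route as the paper: the paper deletes a vertex of minimum degree from a component of $\langle D\rangle$ not isomorphic to $K_2$ and observes that the result is a total dominating set, and your two cases (a leaf with a higher-degree anchor versus minimum degree at least two) are exactly the verification of that observation, spelled out in more detail.
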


\begin{proof}
Let $D$ be a $\gamma_{\times 2}(G)$-set and suppose that $\langle D\rangle$ has a component $G'$ which is not isomorphic to  $K_2$. Let $v\in V(G')$ be a vertex of minimum degree in $G'$. Notice that the set $D\setminus \{v\}$ is a total dominating set of $G$. Hence, $\gamma_t(G)\leq |D\setminus \{v\}|<|D|=\gamma_{\times 2}(G)$, which is a contradiction. Therefore, the result follows. 
\end{proof}

\begin{theorem}\label{teo-tR2-2t}{\rm\cite{TR2DF2019}}
The following statements are equivalent.
   \begin{itemize}
     \item $\gamma_{t\{R2\}}(G)=2\gamma_t(G)$.  
     \item  $\gamma_{t\{R2\}}(G)=\gamma_{tR}(G)$ and $\gamma_t(G)=\gamma(G)$.
   \end{itemize}
\end{theorem}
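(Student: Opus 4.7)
I would prove the equivalence by treating each implication separately.

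\textbf{$(\Rightarrow)$}: Assume $\gamma_{t\{R2\}}(G)=2\gamma_t(G)$. The chain $\gamma_{t\{R2\}}(G)\leq\gamma_{tR}(G)\leq 2\gamma_t(G)$ from Proposition~\ref{prop-inequalities}(i), combined with the hypothesis, immediately forces $\gamma_{t\{R2\}}(G)=\gamma_{tR}(G)=2\gamma_t(G)$. To show $\gamma(G)=\gamma_t(G)$, I would argue by contradiction: assuming $\gamma(G)<\gamma_t(G)$, I would exhibit a TR2DF on $G$ of weight strictly less than $2\gamma_t(G)$. Starting from any $\gamma(G)$-set $D$, the induced subgraph $\langle D\rangle$ must contain at least one vertex isolated in $\langle D\rangle$ (otherwise $D$ would already be a total dominating set). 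The natural construction is to set $V_2=D$ and $V_1=T$, where $T\subseteq V(G)\setminus D$ is a minimum set meeting the open neighbourhood of every such isolated vertex. A direct check shows that $D\cup T$ is a total dominating set and that every $u\in V_0$ has a neighbour in $V_2=D$, so the function $f(V_0,V_1,V_2)$ is a TR2DF of weight $2\gamma(G)+|T|$. A clean sufficient condition to conclude is that $D$ is extendable to a $\gamma_t(G)$-set $D^*$: then $T=D^*\setminus D$ satisfies $|T|=\gamma_t(G)-\gamma(G)$ and the TR2DF has weight $\gamma(G)+\gamma_t(G)<2\gamma_t(G)$.

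\textbf{$(\Leftarrow)$}: Assume $\gamma(G)=\gamma_t(G)$ and $\gamma_{t\{R2\}}(G)=\gamma_{tR}(G)$. Since $\gamma_{tR}(G)\leq 2\gamma_t(G)$, it suffices to establish $\gamma_{tR}(G)\geq 2\gamma_t(G)$. For any TRDF $f(V_0,V_1,V_2)$, the set $V_1\cup V_2$ is a total dominating set, giving $|V_1|+|V_2|\geq\gamma_t(G)$; and the set $V_2\cup(V_1\setminus N(V_2))$ is a dominating set (since $V_2$ dominates $V_0$ and $V_1\cap N(V_2)$, while $V_1\setminus N(V_2)$ is contained in the set), giving $|V_2|+|V_1\setminus N(V_2)|\geq\gamma(G)=\gamma_t(G)$. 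Adding these two bounds yields $\omega(f)+|V_1\setminus N(V_2)|\geq 2\gamma_t(G)$, that is, $\omega(f)\geq 2\gamma_t(G)-|V_1\setminus N(V_2)|$. Using the hypothesis $\gamma_{t\{R2\}}(G)=\gamma_{tR}(G)$, an exchange argument shows that one may choose a $\gamma_{tR}$-function with $V_1\subseteq N(V_2)$, which then forces $\omega(f)\geq 2\gamma_t(G)$ and finishes the proof.

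\textbf{Main obstacle.} The heart of the proof is the construction in $(\Rightarrow)$ when $\langle D\rangle$ contains many isolated vertices: the naive choice $V_2=D$, $V_1=T$ may only yield weight $2\gamma_t(G)$ and not strictly less (for instance on $C_8$ this is exactly what happens with the natural $\gamma(G)$-set). One therefore either needs to guarantee that some $\gamma(G)$-set is contained in a $\gamma_t(G)$-set, or else to switch to a TR2DF corresponding to a small double dominating set (with $V_2=\emptyset$) in order to beat $2\gamma_t(G)$ in the general case. Carrying out this step rigorously is the principal technical task of the argument.
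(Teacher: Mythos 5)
The paper does not prove this statement at all---it is quoted from \cite{TR2DF2019}---so your attempt must stand on its own, and at present both implications bottom out in unproved claims. In $(\Rightarrow)$, after the (correct) sandwich argument, everything rests on producing a TR2DF of weight strictly below $2\gamma_t(G)$ whenever $\gamma(G)<\gamma_t(G)$, and your construction needs a $\gamma(G)$-set $D$ that extends to a $\gamma_t(G)$-set; you yourself flag this as the unresolved ``principal technical task''. The difficulty is self-inflicted: by insisting on $V_2=D$ you are building a total Roman dominating function, which is strictly more than required. A TR2DF only needs $f(N(v))\ge 2$ for $v\in V_0$, so you may simply superpose a $\gamma(G)$-set $D$ and a $\gamma_t(G)$-set $D_t$ with no containment relation: set $f(v)=2$ on $D\cap D_t$, $f(v)=1$ on the symmetric difference of $D$ and $D_t$, and $f(v)=0$ elsewhere. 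Then $V_1\cup V_2\supseteq D_t$ is total dominating, and every $v$ with $f(v)=0$ has a neighbour in $D$ and a neighbour in $D_t$, which contribute at least $2$ to $f(N(v))$ even when they coincide (the common vertex then has weight $2$). This gives the unconditional bound $\gamma_{t\{R2\}}(G)\le \gamma(G)+\gamma_t(G)$, and $2\gamma_t(G)=\gamma_{t\{R2\}}(G)\le\gamma(G)+\gamma_t(G)$ forces $\gamma(G)=\gamma_t(G)$ immediately.

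In $(\Leftarrow)$ the gap is the asserted ``exchange argument'' yielding a $\gamma_{tR}(G)$-function with $V_1\subseteq N(V_2)$. As stated it is false: for $P_2$ the unique minimum TRDF has $V_1=V(P_2)$ and $V_2=\emptyset$. Any correct version must therefore invoke the hypothesis $\gamma(G)=\gamma_t(G)$, and the obvious exchange (demote $v\in V_1\setminus N(V_2)$ to $V_0$ while promoting one of its $V_1$-neighbours to $V_2$) preserves the weight but can destroy total domination of $V_1\cup V_2$, since another vertex may have had $v$ as its only positive neighbour. Your two inequalities give only $\omega(f)\ge 2\gamma_t(G)-|V_1\setminus N(V_2)|$, so the key inequality $\gamma_{tR}(G)\ge 2\gamma_t(G)$ under $\gamma(G)=\gamma_t(G)$---the entire content of this direction once the hypothesis $\gamma_{t\{R2\}}(G)=\gamma_{tR}(G)$ transfers it to $\gamma_{t\{R2\}}$---remains unestablished. (A useful sanity check: $K_{2,3}$ has $\gamma=\gamma_t=2$ but $\gamma_{t\{R2\}}(K_{2,3})=3<4=\gamma_{tR}(K_{2,3})=2\gamma_t(K_{2,3})$, so the second hypothesis really is needed and any argument for this half must, as yours correctly does, work with $\gamma_{tR}$-functions rather than TR2DFs; the missing step is precisely the one you defer to the exchange argument.)
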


The following theorem merges two results obtained in \cite{TR2DF2019}  and  \cite{haynes2000}.

\begin{theorem}[{\rm \cite{TR2DF2019}} and {\rm\cite{haynes2000}}]\label{teo-x2=2}
The following statements are equivalent.
   \begin{itemize}
     \item $\gamma_{t\{R2\}}(G)=2$. 
     \item $\gamma_{\times 2}(G)=2$.
     \item $G$ has at least two universal vertices.
   \end{itemize}
\end{theorem}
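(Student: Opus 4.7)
The plan is to establish the three-way equivalence by a cyclic chain of implications $(3)\Rightarrow(2)\Rightarrow(1)\Rightarrow(3)$, where only the last step requires any real work. I would first record the trivial lower bounds $\gamma_{\times 2}(G)\ge 2$ and $\gamma_{t\{R2\}}(G)\ge 2$ (the latter via Proposition~\ref{prop-inequalities}(i), using $\gamma_t(G)\ge 2$ whenever $G$ has no isolated vertex), so that throughout the argument $=2$ is equivalent to $\le 2$.

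For $(3)\Rightarrow(2)$, I would take two universal vertices $u,v$ of $G$ and verify directly from the definition that $\{u,v\}$ is a double dominating set: every vertex of $G$ lies in $N[u]\cap N[v]=V(G)$, hence is dominated by both. Thus $\gamma_{\times 2}(G)\le 2$. For $(2)\Rightarrow(1)$, I would simply invoke $\gamma_{t\{R2\}}(G)\le \gamma_{\times 2}(G)=2$ from Proposition~\ref{prop-inequalities}(ii) together with the lower bound above.

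The main obstacle is $(1)\Rightarrow(3)$, and I would attack it by examining the structure of a TR2DF $f(V_0,V_1,V_2)$ of weight $2$. Since $\omega(f)=|V_1|+2|V_2|=2$, there are exactly two cases: either $|V_2|=1$ and $|V_1|=0$, or $|V_2|=0$ and $|V_1|=2$. The first case is ruled out immediately: the set $V_1\cup V_2$ would be a single vertex, which cannot be a \emph{total} dominating set since a vertex never lies in its own open neighbourhood. In the remaining case $V_1=\{u,v\}$, the total domination condition forces $uv\in E(G)$, and the condition $f(N(w))\ge 2$ for every $w\in V_0$ forces $\{u,v\}\subseteq N(w)$, i.e., every vertex outside $\{u,v\}$ is adjacent to both $u$ and $v$. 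Combined with $uv\in E(G)$, this yields $N[u]=N[v]=V(G)$, so $u$ and $v$ are two universal vertices, giving $(3)$.

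I do not expect any technical difficulty beyond this small case split; the entire argument is definition-chasing, and the only subtlety is remembering that the ``total'' in total Roman $\{2\}$-domination excludes the configuration $|V_2|=1$, $|V_1|=0$.
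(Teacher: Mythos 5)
Your proof is correct. Note that the paper does not prove this theorem at all: it is quoted as a combination of results from the cited references \cite{TR2DF2019} and \cite{haynes2000}, so there is no in-paper argument to compare against. Your self-contained verification is sound: the lower bounds $\gamma_{\times 2}(G)\ge 2$ and $\gamma_{t\{R2\}}(G)\ge \gamma_t(G)\ge 2$ are justified, the implications $(3)\Rightarrow(2)\Rightarrow(1)$ are immediate as you say, and in $(1)\Rightarrow(3)$ the case $|V_2|=1$, $|V_1|=0$ is indeed killed by the totality requirement, while the case $V_1=\{u,v\}$, $V_2=\emptyset$ correctly yields $uv\in E(G)$ from total domination and $\{u,v\}\subseteq N(w)$ for all $w\in V_0$ from $f(N(w))\ge 2$, so $u$ and $v$ are universal.
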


It is readily seen that if $G'$ is a spanning subgraph of $G$, then any $\gamma_{\times 2}(G')$-set is a double dominating set of $G$. Therefore, the following result is immediate.  

\begin{theorem}\label{DoubleDominSpanningSubgraph}
If  $G'$ is a spanning subgraph of $G$ with no isolated vertex, then
$$\gamma_{\times 2}(G)\le \gamma_{\times 2}(G').$$
\end{theorem}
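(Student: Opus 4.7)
The plan is to take a minimum double dominating set of $G'$ and show that it remains a double dominating set when we view it as a subset of $V(G)$, which is already suggested by the sentence preceding the statement. First I would observe that since $G'$ has no isolated vertex, the parameter $\gamma_{\times 2}(G')$ is well defined (otherwise some vertex $v$ would satisfy $N_{G'}[v]=\{v\}$ and could never be dominated twice in $G'$). So let $D$ be a $\gamma_{\times 2}(G')$-set, i.e.\ $|D|=\gamma_{\times 2}(G')$ and $|N_{G'}[v]\cap D|\ge 2$ for every $v\in V(G')$.

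The core step is the immediate inclusion $N_{G'}[v]\subseteq N_G[v]$, which holds because $G'$ is a spanning subgraph of $G$ and therefore $V(G')=V(G)$ and $E(G')\subseteq E(G)$. Consequently,
\[
|N_G[v]\cap D|\ \ge\ |N_{G'}[v]\cap D|\ \ge\ 2
\]
for every $v\in V(G)$, so $D$ is a double dominating set of $G$. This yields $\gamma_{\times 2}(G)\le |D|=\gamma_{\times 2}(G')$, as required.

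There is really no substantive obstacle here: the statement is a one-line monotonicity observation built directly from the definition of double domination and the fact that adding edges can only enlarge closed neighbourhoods. The only point worth flagging explicitly in the write-up is the role of the ``no isolated vertex'' hypothesis, which guarantees the right-hand side $\gamma_{\times 2}(G')$ actually exists as a finite quantity.
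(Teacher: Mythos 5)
Your argument is correct and is exactly the paper's approach: the paper dispenses with the proof by noting that any $\gamma_{\times 2}(G')$-set is a double dominating set of $G$, which is precisely your observation that $N_{G'}[v]\subseteq N_G[v]$ for every vertex $v$. Your additional remark about the role of the ``no isolated vertex'' hypothesis is a sensible clarification but does not change the argument.
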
 

In Proposition \ref{FormulaGammax2LexicographicPnxH=CnxH} we will show some cases of lexicographic product graphs for which the equality above holds. 

\begin{remark}\label{rem-Cn-Pn}
For any integer $n\geq 3$,

\begin{enumerate}[{\rm (i)}]
\item $\gamma_{t\{R2\}}(P_n)\stackrel{\mbox{\rm \cite{TR2DF2019}}}{=}\gamma_{\times 2}(P_n)\stackrel{\mbox{\rm \cite{chellali2006}}}{=}\left\{\begin{array}{ll}
                                 2\left\lceil \frac{n}{3}\right\rceil + 1, & \mbox{if $n\equiv 0 \pmod 3$,} \\[5pt]
                                 2\left\lceil \frac{n}{3}\right\rceil, & \mbox{otherwise.}
                               \end{array}
\right.$

\item $\gamma_{t\{R2\}}(C_n)\stackrel{\mbox{\rm \cite{TR2DF2019}}}{=}\gamma_{\times 2}(C_n)\stackrel{\mbox{\rm \cite{haynes2000}}}{=}                                 \left\lceil \frac{2n}{3}\right\rceil.$
\end{enumerate}
\end{remark}

The next theorem merges two results obtained in \cite{SUmenjak:2012:RDL:2263360.2264103} and \cite{Zhang2011}.

\begin{theorem}[{\rm\cite{SUmenjak:2012:RDL:2263360.2264103}} and {\rm\cite{Zhang2011}}]\label{teo-char-gamma}
For any graph $G$ with no isolated vertex and any nontrivial graph $H$, 
$$\gamma(G\circ H)=\left\{\begin{array}{ll}
                                 \gamma(G), & \mbox{if $\gamma(H)=1$,} \\[5pt]
                                 \gamma_t(G), & \mbox{if $\gamma(H)\geq 2$.}
                               \end{array}
\right.$$
\end{theorem}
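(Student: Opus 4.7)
The plan is to establish matching upper and lower bounds on $\gamma(G\circ H)$ in each of the two cases $\gamma(H)=1$ and $\gamma(H)\ge 2$. In every case the upper bound comes from \emph{lifting} a suitable dominating set of $G$ to $V(G\circ H)$ via a single fixed $H$-coordinate, while the lower bound comes from \emph{projecting} a minimum dominating set of $G\circ H$ back onto $V(G)$, i.e., reading off which copies $H_u$ are hit by $S$.

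For $\gamma(H)=1$, fix a universal vertex $h\in V(H)$. If $D$ is a $\gamma(G)$-set, then $D\times\{h\}$ dominates $G\circ H$: a vertex $(u,v)$ with $u\in D$ is dominated by $(u,h)$ through the universal vertex $h$, and with $u\notin D$ it is dominated by some $(u',h)$ satisfying $u'u\in E(G)$. Conversely, for any dominating set $S$ of $G\circ H$, the set $A=\{u\in V(G): H_u\cap S\ne\emptyset\}$ dominates $G$: for each $u\in V(G)$, a dominator of $(u,v)$ (for arbitrary $v$) has first coordinate either equal to $u$ or $G$-adjacent to $u$, and that coordinate lies in $A$. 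Since $|A|\le|S|$, this gives $\gamma(G\circ H)=\gamma(G)$. For $\gamma(H)\ge 2$, the upper bound is obtained analogously: given a $\gamma_t(G)$-set $T$ and any $h\in V(H)$, the set $T\times\{h\}$ dominates $G\circ H$ because \emph{totality} of $T$ furnishes, for every $(u,v)$, a vertex $u'\in T$ with $u'u\in E(G)$, so that $(u',h)$ dominates $(u,v)$.

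The main step, and the main obstacle, is the lower bound when $\gamma(H)\ge 2$: one must promote $A$ from a mere dominating set of $G$ to a \emph{total} dominating set at no extra cost. Let $S$ be a minimum dominating set of $G\circ H$, set $A=\{u\in V(G):H_u\cap S\ne\emptyset\}$, and let $X=\{u\in A:N_G(u)\cap A=\emptyset\}$. For each $u\in X$, no vertex of $S$ outside $H_u$ can dominate anything inside $H_u$ (any such outside dominator would have its first coordinate in $A\cap N_G(u)=\emptyset$), so $S\cap H_u$, viewed inside $H_u\cong H$, must be a dominating set of $H$; hence $|S\cap H_u|\ge\gamma(H)\ge 2$. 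Because $G$ has no isolated vertex, every $u\in X$ admits a $G$-neighbour $u'\notin A$; choosing one such $u'$ per $u$ yields a set $B$ with $|B|\le|X|$ such that $A\cup B$ is a total dominating set of $G$. The hypothesis $\gamma(H)\ge 2$ is invoked precisely to guarantee $|S\cap H_u|\ge 2$ for $u\in X$, and this surplus is what funds $B$ in the final accounting $\gamma_t(G)\le|A\cup B|\le|A|+|X|=(|A|-|X|)+2|X|\le\sum_{u\in A}|S\cap H_u|=|S|$, closing the proof.
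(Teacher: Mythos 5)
Your proof is correct. Note that the paper does not prove Theorem \ref{teo-char-gamma} at all: it is imported verbatim from \cite{SUmenjak:2012:RDL:2263360.2264103} and \cite{Zhang2011}, so there is no in-paper argument to compare against. Your lift-and-project scheme is the standard route to this result, and every step checks out: the upper bounds $\gamma(G\circ H)\le\gamma(G)$ (via a universal vertex of $H$) and $\gamma(G\circ H)\le\gamma_t(G)$ (via totality of $T$) are routine, and the projection $A$ of a minimum dominating set $S$ is always a dominating set of $G$, which settles the case $\gamma(H)=1$. The genuinely nontrivial step is exactly the one you isolate: when $\gamma(H)\ge 2$, each $u\in A$ with $N_G(u)\cap A=\emptyset$ forces $S\cap V(H_u)$ to dominate the whole copy $H_u$ internally, hence $|S\cap V(H_u)|\ge 2$, and this surplus pays for the external neighbours $B$ needed to upgrade $A$ to a total dominating set; the accounting $\gamma_t(G)\le(|A|-|X|)+2|X|\le|S|$ is valid since $B\cap A=\emptyset$ and $G$ has no isolated vertex guarantees the neighbours exist. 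The only cosmetic remark is that your upper bound $\gamma(G\circ H)\le\gamma_t(G)$ holds for all nontrivial $H$, so in the case $\gamma(H)=1$ it is the weaker of the two bounds and is simply not needed there; this does not affect correctness.
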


\begin{theorem}\label{teo-char-gamma-t}{\rm\cite{TWRDF(LEX)2019}}
For any graph $G$ with no isolated vertex and any nontrivial graph $H$, 
$$\gamma_t(G\circ H)=\gamma_t(G).$$
\end{theorem}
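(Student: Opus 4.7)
The plan is to prove both inequalities $\gamma_t(G\circ H)\leq \gamma_t(G)$ and $\gamma_t(G\circ H)\geq \gamma_t(G)$ by the usual lifting/projection between $G$ and $G\circ H$.

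For the upper bound, I would take a $\gamma_t(G)$-set $D$, fix any $v_0\in V(H)$, and set $D'=D\times\{v_0\}$. Given any $(x,y)\in V(G\circ H)$, since $D$ totally dominates $G$ there is some $u\in D$ with $ux\in E(G)$; then $(u,v_0)\in D'$ is adjacent to $(x,y)$ in $G\circ H$ and distinct from it. Thus $D'$ is a total dominating set of $G\circ H$ of cardinality $\gamma_t(G)$.

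For the lower bound, let $S$ be a $\gamma_t(G\circ H)$-set, put $S_u=\{v\in V(H):(u,v)\in S\}$ for each $u\in V(G)$, and let $P=\{u\in V(G):S_u\neq\emptyset\}$ be the projection; clearly $|P|\leq |S|$. If $P$ already totally dominates $G$ we are done, so define the set of troublesome vertices $B=\{x\in V(G):N(x)\cap P=\emptyset\}$. The key observation is that for each $x\in B$ and each $y\in V(H)$, any dominator $(u,v)\in S$ of $(x,y)$ must satisfy $u=x$ (otherwise $u\in P$ would be a neighbor of $x$ in $G$, contradicting $x\in B$). This forces $S_x$ to be a total dominating set of $H$, so in particular $x\in P$ and $|S_x|\geq 2$.

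I would then pick, for each $x\in B$, a neighbor $w_x\in V(G)$ (which exists since $G$ has no isolated vertex), and consider $P^{\ast}=P\cup\{w_x:x\in B\}$. By construction every vertex $x\in B$ has $w_x\in P^{\ast}$ as a neighbor, while every $v\notin B$ has a neighbor in $P\subseteq P^{\ast}$; hence $P^{\ast}$ totally dominates $G$. For the cardinality, $|P^{\ast}|\leq |P|+|B|$, and since $|S_u|\geq 2$ for $u\in B\subseteq P$ and $|S_u|\geq 1$ for $u\in P\setminus B$,
\[
 |S|=\sum_{u\in P}|S_u|\geq 2|B|+(|P|-|B|)=|P|+|B|\geq |P^{\ast}|.
\]
Therefore $\gamma_t(G)\leq |P^{\ast}|\leq |S|=\gamma_t(G\circ H)$, and combined with the upper bound this yields equality.

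The main obstacle is exactly the bad case in the lower bound, where the naive projection $P$ fails to be totally dominating: overcoming it requires the double accounting above, which is fueled by the fact that each bad fiber $\{x\}\times V(H)$ must contain at least two elements of $S$, giving enough budget to pay for the extra neighbor $w_x$ added to $P$.
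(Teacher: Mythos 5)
The paper does not actually prove this statement: Theorem \ref{teo-char-gamma-t} is quoted from \cite{TWRDF(LEX)2019} (a submitted manuscript) with no argument given, so there is no in-paper proof to compare yours against. Judged on its own, your proof is correct and complete. The upper bound via lifting a $\gamma_t(G)$-set into a single $H$-layer is the standard argument, and your lower bound correctly identifies the only obstruction to the naive projection $P$ being a total dominating set of $G$: a vertex $x$ with $N(x)\cap P=\emptyset$ forces every $(x,y)$ to be dominated from within its own fiber, so $S_x$ must totally dominate $H$ and hence $|S_x|\ge 2$, which pays for the extra neighbour $w_x$ you insert into $P^{\ast}$. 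The counting $|S|=\sum_{u\in P}|S_u|\ge |P|+|B|\ge |P^{\ast}|$ is sound. One point you leave implicit: if $H$ has an isolated vertex, then $S_x$ cannot be a total dominating set of $H$, so the same argument shows $B=\emptyset$ and the projection itself already works; your inequalities remain valid in that case, but a one-line remark would make the proof airtight for all nontrivial $H$.
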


\section{Main results on lexicographic product graphs}

Our first result shows that the double domination number  and the total Roman $\{2\}$-domination number coincide for  lexicographic product graphs.

\begin{theorem}\label{teo-equality-TR2D-DD}
For any graph $G$ with no isolated vertex and any nontrivial graph $H$,
$$\gamma_{\times 2}(G\circ H)=\gamma_{t\{R2\}}(G\circ H).$$
\end{theorem}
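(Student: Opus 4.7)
The plan is to prove the inequality $\gamma_{\times 2}(G\circ H)\le \gamma_{t\{R2\}}(G\circ H)$, since the reverse is Proposition~\ref{prop-inequalities}(ii). Starting from any $\gamma_{t\{R2\}}(G\circ H)$-function $f(V_0,V_1,V_2)$, I would construct a double dominating set $S$ of $G\circ H$ with $|S|\le \omega(f)=|V_1|+2|V_2|$, and then apply the definition of $\gamma_{\times 2}$.

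First I would set $D=V_1\cup V_2$ and, for each $u\in V(G)$, write $B_u=D\cap(\{u\}\times V(H))$. Since $D$ is a total dominating set, every vertex of $D$ has at least one $D$-neighbour, and every $x\in V_0$ has at least one $D$-neighbour by $f(N(x))\ge 2$. I would call $x=(u,v)\in V_0$ \emph{bad} when $|D\cap N(x)|=1$; then the unique element $y=(u',v')$ of $D\cap N(x)$ must lie in $V_2$, otherwise $f(N(x))=1$. Each bad vertex falls into exactly one of two situations: (Case~B) $u'\in N_G(u)$, or (Case~A) $u'=u$. Because $\{u'\}\times V(H)\subseteq N(x)$ in Case~B and $N_G(u')\times V(H)\subseteq N(x)$ in Case~A, the condition $D\cap N(x)=\{y\}$ forces $B_{u'}=\{y\}$ in Case~B and $B_{u''}=\emptyset$ for every $u''\in N_G(u')$ in Case~A. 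A single $y$ cannot give rise to bad vertices of both types, because together these two conditions would leave $y$ with no neighbour in $D$, contradicting total domination.

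For each \emph{problematic} $y\in V_2$ (one whose preimage under the bad correspondence is nonempty) I would add a single helper $z_y$, letting $T$ collect these helpers and setting $S=D\cup T$. When $y=(u',v')$ is of Case~B-type, the hypothesis $|V(H)|\ge 2$ lets me pick $\beta\in V(H)\setminus\{v'\}$ and set $z_y=(u',\beta)$; then $B_{u'}=\{y\}$ yields $z_y\notin D$, and $u'\in N_G(u)$ yields $z_y\in N(x)$ for every associated bad $x=(u,v)$. When $y=(u',v')$ is of Case~A-type, the hypothesis that $G$ has no isolated vertex lets me pick $u''\in N_G(u')$ and set $z_y=(u'',w)$ for any $w\in V(H)$; then $B_{u''}=\emptyset$ yields $z_y\notin D$, and $u''\in N_G(u')$ yields $z_y\in N(x)$ for every associated bad $x$. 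Since all helpers lie outside $D$ and their number is at most the number of problematic $y$'s, which is at most $|V_2|$, I obtain $|S|\le |D|+|T|\le |V_1|+2|V_2|=\omega(f)$. Every bad $x$ then has $\{y,z_y\}\subseteq S\cap N[x]$, while every other vertex already sees two elements of $D\subseteq S$ in its closed neighbourhood, so $S$ is double dominating and the inequality follows. The hard part will be the mutual-exclusion step in the previous paragraph: it is exactly what allows me to charge each new vertex against a distinct element of $V_2$, making the budget $|V_2|$ sufficient.
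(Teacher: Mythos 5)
Your argument is correct: the dichotomy between Case~A and Case~B is exhaustive (adjacency of $(u,v)$ and $(u',v')$ in $G\circ H$ forces $u'\in N_G(u)$ or $u'=u$ with $v'\in N_H(v)$), the mutual-exclusion step is sound (the two structural conditions together would strip $y$ of every neighbour of positive weight, both inside $H_{u'}$ and in the adjacent copies, contradicting that $V_1\cup V_2$ is a total dominating set), and the hypotheses that $H$ is nontrivial and $G$ has no isolated vertex are invoked exactly where the two kinds of helpers are chosen. The paper exploits the same underlying exchange---each weight-$2$ vertex is traded for two weight-$1$ vertices, one placed either in its own copy of $H$ or in a neighbouring copy according to where its positively weighted neighbours sit---but packages it as an extremal argument: it fixes a $\gamma_{t\{R2\}}(G\circ H)$-function with $|V_2|$ minimum, shows that a fixed $(u,v)\in V_2$ could always be split in this way to produce an optimal function with smaller $|V_2|$, and concludes $V_2=\emptyset$, so that $V_1$ itself is the desired double dominating set. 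Your version is a one-shot charging construction from an arbitrary optimal function: it costs you the mutual-exclusion lemma, which has no counterpart in the paper, but it avoids the descent on $|V_2|$ and the attendant re-verification that the modified function remains a total Roman $\{2\}$-dominating function of the same weight at each step.
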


\begin{proof}
Proposition \ref{prop-inequalities} (ii) leads to $\gamma_{\times 2}(G\circ H)\ge \gamma_{t\{R2\}}(G\circ H)$.  Let $f(V_0,V_1,V_2)$ be a $\gamma_{t\{R2\}}(G\circ H)$-function such that $|V_2|$ is minimum. Suppose that $\gamma_{\times 2}(G\circ H)>\gamma_{t\{R2\}}(G\circ H)$.  In such a case, $V_2\neq \emptyset$ and we can differentiate two cases for a fixed vertex $(u,v)\in V_2$. 

\vspace{.2cm}
\noindent
Case 1. $N(u,v)\cap (V_1\cup V_2)\subseteq V(H_u)$. In this case, for any $(u',v')\in N(u)\times V(H)$ we define the function $g(V_0',V_1',V_2')$ where $V_0'=V_0\setminus\{(u',v')\}$, $V_1'=V_1\cup \{(u,v),(u',v')\}$ and $V_2'=V_2\setminus \{(u,v)\}$. Observe that $V_1'\cup V_2'$ is a total dominating set of $G\circ H$ and every vertex $w\in V_0'\subseteq V_0$ satisfies that $g(N(w))\geq 2$. Hence, $g$ is a $\gamma_{t\{R2\}}(G\circ H)$-function and $|V_2'|=|V_2|-1$, which is a contradiction.

\vspace{.2cm}
\noindent   
Case 2. $N(u)\times V(H)\cap (V_1\cup V_2)\neq \emptyset$.   If $f(u,v')>0$ for every vertex $v'\in V(H)$, then the function $g$, defined by $g(u,v)=1$ and $g(x,y)=f(x,y)$ whenever $(x,y)\in V(G\circ H)\setminus \{(u,v)\}$, is a TR2DF on $G\circ H$ and $\omega(g)=\omega(f)-1$, which is a contradiction. Hence, there exists a vertex $v'\in V(H)$ such that $f(u,v')=0$. In this case, we define the function $g(V_0',V_1',V_2')$ where $V_0'=V_0\setminus\{(u,v')\}$, $V_1'=V_1\cup \{(u,v),(u,v')\}$ and $V_2'=V_2\setminus \{(u,v)\}$. Notice that $V_1'\cup V_2'$ is a total dominating set of $G\circ H$ and every vertex $w\in V_0'\subseteq V_0$ satisfies that $g(N(w))\geq 2$. Hence, $g$ is a $\gamma_{t\{R2\}}(G\circ H)$-function and $|V_2'|=|V_2|-1$, which is a contradiction again.  

According to the two cases above, we deduce that $V_2=\emptyset$. Therefore, $V_1$ is a $\gamma_{\times 2}(G\circ H)$-set and so $\gamma_{\times 2}(G\circ H)=\gamma_{t\{R2\}}(G\circ H).$ 
\end{proof}

From now on, the main goal is to obtain tight bounds or closed formulas for $\gamma_{\times 2}(G\circ H)$ and express them in terms of invariants of $G$ and $H$. 

A set $X\subseteq V(G)$ is called a $2$-\emph{packing} if $N[u]\cap N[v] = \emptyset$ for every pair of different vertices $u,v \in X$, \cite{book2}. The $2$-\emph{packing number} $\rho(G)$ is the maximum cardinality among all $2$-packing sets of $G$. As usual, a $2$-packing of cardinality $\rho(G)$ is called a $\rho(G)$-set.

\begin{theorem}\label{teo-bounds-x2}  
For any graph $G$ with no isolated vertex and any nontrivial graph $H$, 
$$\max\{\gamma_t(G),2\rho(G)\}\leq \gamma_{\times 2}(G\circ H)\leq 2\gamma_t(G).$$
\end{theorem}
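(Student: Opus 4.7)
The plan is to establish the three inequalities $\gamma_t(G)\leq \gamma_{\times 2}(G\circ H)$, $2\rho(G)\leq \gamma_{\times 2}(G\circ H)$ and $\gamma_{\times 2}(G\circ H)\leq 2\gamma_t(G)$ separately. The first is essentially free: any double dominating set of a graph without isolated vertices is automatically a total dominating set (each vertex is dominated at least twice and hence by at least one vertex different from itself), so $\gamma_t(G\circ H)\leq \gamma_{\times 2}(G\circ H)$, and Theorem \ref{teo-char-gamma-t} gives $\gamma_t(G\circ H)=\gamma_t(G)$.

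For the upper bound, I would fix a $\gamma_t(G)$-set $D$ and two distinct vertices $v_1,v_2\in V(H)$, which exist because $H$ is nontrivial. The natural candidate is $S=D\times\{v_1,v_2\}$, whose cardinality is $2\gamma_t(G)$. To check that $S$ is a double dominating set of $G\circ H$, pick any $(x,y)\in V(G\circ H)$: since $D$ is totally dominating in $G$, there exists $u\in D$ with $ux\in E(G)$, and the edge $ux$ forces both $(u,v_1)$ and $(u,v_2)$ to dominate $(x,y)$ in $G\circ H$. Thus $(x,y)$ is dominated by at least two elements of $S$.

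The lower bound $2\rho(G)\leq \gamma_{\times 2}(G\circ H)$ is the substantive step. Let $X$ be a $\rho(G)$-set, let $S$ be a $\gamma_{\times 2}(G\circ H)$-set, and for each $u\in X$ put $S_u=S\cap (N_G[u]\times V(H))$. The $2$-packing property of $X$ implies that the closed neighbourhoods $N_G[u]$, $u\in X$, are pairwise disjoint, so the sets $S_u$ are pairwise disjoint as well. The key observation is that any vertex $(u,v)\in V(H_u)$ is doubly dominated by $S$, and a direct check of the adjacency rule in $G\circ H$ shows that any dominator $(x,y)$ of $(u,v)$ must satisfy $x\in N_G[u]$; hence at least two elements of $S$ belong to $S_u$, giving $|S_u|\geq 2$. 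Summing over $X$ yields $|S|\geq \sum_{u\in X}|S_u|\geq 2|X|=2\rho(G)$.

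I do not anticipate a serious obstacle. The only delicate point is the last inequality: one must notice that the correct \emph{local region} associated with $u\in X$ is $N_G[u]\times V(H)$ rather than merely $\{u\}\times V(H)$, because a dominator of a vertex of $H_u$ may lie in a neighbouring copy of $H$, and that the $2$-packing condition is precisely what forces these enlarged local regions to stay disjoint.
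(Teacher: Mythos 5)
Your proof is correct, and it splits into one part that mirrors the paper and one part that is genuinely different. The lower bound $2\rho(G)\leq\gamma_{\times 2}(G\circ H)$ is essentially the paper's argument verbatim: the paper also partitions a minimum double dominating set according to the pairwise disjoint regions $N[u]\times V(H)$, $u\in X$, and extracts at least two vertices from each; your remark that the ``local region'' must be $N_G[u]\times V(H)$ rather than $\{u\}\times V(H)$ is exactly the point the paper's double sum $\sum_{u\in X}\sum_{w\in N[u]}|D\cap V(H_w)|$ encodes. Where you diverge is the pair of bounds $\gamma_t(G)\leq\gamma_{\times 2}(G\circ H)\leq 2\gamma_t(G)$. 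The paper obtains both by citing Proposition \ref{prop-inequalities} together with Theorem \ref{teo-char-gamma-t}, which for the upper bound implicitly routes through the total Roman $\{2\}$-domination machinery (via $\gamma_{\times 2}(G\circ H)=\gamma_{t\{R2\}}(G\circ H)\leq 2\gamma_t(G\circ H)$, using Theorem \ref{teo-equality-TR2D-DD}); note that the inequality $\gamma_{\times 2}\leq 2\gamma_t$ is false for general graphs (e.g.\ stars), so some product-specific input is unavoidable there. You instead give a self-contained construction, $S=D\times\{v_1,v_2\}$ for a $\gamma_t(G)$-set $D$ and two distinct $v_1,v_2\in V(H)$, and verify directly that every vertex of $G\circ H$ has two neighbours in $S$; this is correct (total domination of $D$ in $G$ supplies, for every $x\in V(G)$, a neighbour $u\in D$, and then both $(u,v_1)$ and $(u,v_2)$ dominate all of $V(H_x)$), and it has the advantage of not relying on Theorem \ref{teo-equality-TR2D-DD} or on any Roman-domination apparatus. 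Similarly, your observation that a double dominating set is automatically a total dominating set gives the lower bound $\gamma_t(G)\leq\gamma_{\times 2}(G\circ H)$ more elementarily than the paper's citation chain. Both routes are sound; yours is more constructive, the paper's is shorter given the results it has already established.
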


\begin{proof}
By Proposition \ref{prop-inequalities} (i) and Theorem \ref{teo-char-gamma-t} we deduce that 
$$\gamma_t(G)=\gamma_t(G\circ H)\leq \gamma_{\times 2}(G\circ H)\leq 2\gamma_t(G\circ H)=2\gamma_t(G).$$ 
Now, for any $\rho(G)$-set $X$ and any $\gamma_{\times 2}(G\circ H)$-set $D$ we have that
$$\gamma_{\times 2}(G\circ H)=|D|=\sum_{u\in V(G)}|D\cap V(H_u)|\geq \sum_{u\in X}\sum_{w\in N[u]}|D\cap V(H_w)|\geq 2|X|=2\rho(G).$$
Therefore, the proof is complete.
\end{proof}

We would point out that the upper bound $\gamma_{\times 2}(G\circ H)\leq \min\{2\gamma_t(G),\gamma(G)\gamma_{\times 2}(H)\}$ was proposed in \cite{Cuivillas2014} for the particular case in which $G$ and $H$ are connected. Obviously, the connectivity is not needed, and the bound $\gamma_{\times 2}(G\circ H)\leq \gamma(G)\gamma_{\times 2}(H)$ also holds for  any graph $G$ (even if $G$ is empty) and any graph $H$ with no isolated vertices. 

In Theorem \ref{CharactGammax2Lexic=2gammat} we will show cases in which $\gamma_{\times 2}(G\circ H)=2\gamma_t(G)$, while in Theorem \ref{teo-G-gamma=1-consequences} (i) and (ii) we will show cases in which $\gamma_{\times 2}(G\circ H)=2\rho(G)$ or $\gamma_{\times 2}(G\circ H)=\gamma_t(G)$.

\begin{corollary}\label{Coroteo-bounds-st}  
If $\gamma(G)=1$, then for any nontrivial graph $H$, 
$$2\leq \gamma_{\times 2}(G\circ H)\leq 4.$$
\end{corollary}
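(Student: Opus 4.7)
The plan is to read both bounds off results already proved; this really is a corollary, and no new construction should be needed. For the lower bound $\gamma_{\times 2}(G\circ H)\geq 2$, I would just observe that any double dominating set must contain at least two vertices, since every vertex is required to be dominated by two distinct vertices of the set and $V(G\circ H)=V(G)\times V(H)$ is nonempty (it has at least $|V(H)|\geq 2$ elements because $H$ is nontrivial).

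For the upper bound $\gamma_{\times 2}(G\circ H)\leq 4$, the key intermediate step is to show that $\gamma(G)=1$ forces $\gamma_t(G)\leq 2$. Under the standing hypothesis that $G$ has no isolated vertex (implicit from the way the paper uses Theorem~\ref{teo-bounds-x2}, and in any case forced here, since a universal vertex cannot coexist with an isolated vertex when $|V(G)|\geq 2$), a universal vertex $u$ of $G$ has at least one neighbour $w$. Then $\{u,w\}$ is a total dominating set of $G$: $u$ dominates every vertex of $G$ (in particular $w$), and $w$ dominates $u$. Hence $\gamma_t(G)\leq 2$, and applying the upper bound in Theorem~\ref{teo-bounds-x2} yields
\[
\gamma_{\times 2}(G\circ H)\leq 2\gamma_t(G)\leq 4.
\]

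I do not foresee any real obstacle: the whole argument is the two-line observation that a universal vertex together with one of its neighbours forms a total dominating set of size two, combined with the already-established inequality $\gamma_{\times 2}(G\circ H)\leq 2\gamma_t(G)$ from Theorem~\ref{teo-bounds-x2}.
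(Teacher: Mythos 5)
Your proposal is correct and matches the derivation the paper intends: the corollary follows from Theorem \ref{teo-bounds-x2} once one notes that a universal vertex together with any neighbour is a total dominating set, so $\gamma_t(G)=2$, giving $\gamma_{\times 2}(G\circ H)\le 2\gamma_t(G)=4$, while the lower bound $2$ is immediate (either from the definition, as you argue, or from $\gamma_{\times 2}(G\circ H)\ge\gamma_t(G)\ge 2$). Your remark that $\gamma(G)=1$ together with $|V(G)|\ge 2$ already rules out isolated vertices is a correct and welcome clarification.
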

In Section \ref{SectionCharacterizationSmallValues} we characterize the graphs with $\gamma_{\times 2}(G\circ H)\in \{2,3\}$. Hence, by Corollary~\ref{Coroteo-bounds-st} the graphs with $\gamma_{\times 2}(G\circ H)=4$ will be automatically characterized whenever $\gamma(G)=1$.

\begin{theorem}\label{CharactGammax2Lexic=2gammat}
If $G$ is a graph with no isolated vertex and $H$ is a nontrivial graph, then the following statements are equivalent.

\begin{enumerate}[{\rm (a)}]
\item $\gamma_{\times 2}(G\circ H)=2\gamma_t(G)$.
\item $\gamma_{\times 2}(G\circ H)=\gamma_{tR}(G\circ H)$ and $(\gamma_t(G)=\gamma(G)$ or $\gamma(H)\geq 2)$.
\end{enumerate} 
\end{theorem}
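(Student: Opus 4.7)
The plan is a short chain of substitutions that specializes Theorem \ref{teo-tR2-2t} to the lexicographic product $G\circ H$, which has no isolated vertex because $G$ has none and $H$ is nontrivial. First, I would combine Theorem \ref{teo-equality-TR2D-DD} with Theorem \ref{teo-char-gamma-t} to rewrite statement (a) as
\[
\gamma_{t\{R2\}}(G\circ H)=2\gamma_t(G\circ H).
\]
Applying Theorem \ref{teo-tR2-2t} directly to $G\circ H$ turns the latter into the conjunction
\[
\gamma_{t\{R2\}}(G\circ H)=\gamma_{tR}(G\circ H)\quad\text{and}\quad \gamma_t(G\circ H)=\gamma(G\circ H).
\]
A second appeal to Theorem \ref{teo-equality-TR2D-DD} turns the first equality into $\gamma_{\times 2}(G\circ H)=\gamma_{tR}(G\circ H)$, which is precisely the first clause of (b).

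The remaining step is to show that $\gamma_t(G\circ H)=\gamma(G\circ H)$ is equivalent to the disjunction appearing in (b). For this I would combine Theorem \ref{teo-char-gamma-t} with the two cases of Theorem \ref{teo-char-gamma}. If $\gamma(H)\geq 2$, both $\gamma_t(G\circ H)$ and $\gamma(G\circ H)$ equal $\gamma_t(G)$, so the equality is automatic. If $\gamma(H)=1$, then $\gamma_t(G\circ H)=\gamma_t(G)$ while $\gamma(G\circ H)=\gamma(G)$, so the equality reduces to $\gamma_t(G)=\gamma(G)$. Hence $\gamma_t(G\circ H)=\gamma(G\circ H)$ holds if and only if $\gamma_t(G)=\gamma(G)$ or $\gamma(H)\geq 2$, which completes the translation.

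I do not foresee a real obstacle: the argument is essentially a bookkeeping combination of results already available in the excerpt. The only points that require care are verifying the side hypotheses for each invocation, namely that $G\circ H$ has no isolated vertex (so that Theorems \ref{teo-tR2-2t}, \ref{teo-char-gamma} and \ref{teo-char-gamma-t} can be applied to it) and keeping the two clauses of Theorem \ref{teo-char-gamma} straight when producing the final disjunction.
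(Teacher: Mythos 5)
Your proposal is correct and follows essentially the same route as the paper: both reduce (a) to $\gamma_{t\{R2\}}(G\circ H)=2\gamma_t(G\circ H)$ via Theorems \ref{teo-equality-TR2D-DD} and \ref{teo-char-gamma-t}, apply Theorem \ref{teo-tR2-2t} to $G\circ H$, and translate $\gamma_t(G\circ H)=\gamma(G\circ H)$ into the disjunction using Theorem \ref{teo-char-gamma}. The only cosmetic difference is that you phrase it as a single chain of equivalences while the paper argues the two implications separately.
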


\begin{proof}
Assume that   $\gamma_{\times 2}(G\circ H)=2\gamma_t(G)$. By Theorems \ref{teo-char-gamma-t} and \ref{teo-equality-TR2D-DD} we deduce that 
$$
  \gamma_{t\{R2\}}(G\circ H)=\gamma_{\times 2}(G\circ H)=2\gamma_t(G)=2\gamma_{t}(G\circ H).
$$
Hence,  by Theorem \ref{teo-tR2-2t} we have that $\gamma_{\times 2}(G\circ H)=\gamma_{tR}(G\circ H)$ and $\gamma(G\circ H)=\gamma_t(G\circ H)=\gamma_t(G)$. Notice that $\gamma_t(G\circ H)=\gamma_t(G)$ if and only if  $\gamma_t(G)=\gamma(G)$ or $\gamma(H)\geq 2$, by Theorem  \ref{teo-char-gamma}. Therefore, (b) follows.

Conversely, assume that (b)  holds. 
By Theorem  \ref{teo-equality-TR2D-DD} we have that 
\begin{equation}\label{EqTh2.1}
  \gamma_{t\{R2\}}(G\circ H)=\gamma_{\times 2}(G\circ H)=\gamma_{tR}(G\circ H).
\end{equation}
 Now, if  $\gamma_t(G)=\gamma(G)$ or $\gamma(H)\geq 2$, by Theorems \ref{teo-char-gamma} and  \ref{teo-char-gamma-t} we deduce that
\begin{equation}\label{Eqxx}
\gamma_{t}(G\circ H)=\gamma_t(G)=\gamma(G\circ H).
\end{equation}
Hence, Theorem \ref{teo-tR2-2t} and equations  \eqref{EqTh2.1} and \eqref{Eqxx} lead to 
$\gamma_{\times 2}(G\circ H)=\gamma_{t\{R2\}}(G\circ H)=2\gamma_{t}(G\circ H)=2\gamma_{t}(G)$, as required. 
\end{proof}

It was shown in \cite{Cockayne1980} that for any connected graph $G$ of order $n\geq 3$, $\gamma_t(G)\leq \frac{2n}{3}$. Hence, Proposition \ref{prop-inequalities} (i) and Theorem \ref {teo-equality-TR2D-DD} lead to the following result. 

\begin{theorem}\label{prop-n}
For any connected graph $G$ of order $n\geq 3$ and any graph $H$,
$$\gamma_{\times 2}(G\circ H)\leq 2\left\lfloor\frac{2n}{3}\right\rfloor.$$
\end{theorem}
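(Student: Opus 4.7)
The plan is to chain together three results already established in the paper plus the classical Cockayne--Dawes--Hedetniemi bound cited just before the statement. First I would invoke the Cockayne--Dawes--Hedetniemi theorem, which gives $\gamma_t(G)\le 2n/3$ for any connected graph $G$ of order $n\ge 3$; since $\gamma_t(G)$ is an integer, this strengthens to $\gamma_t(G)\le \lfloor 2n/3\rfloor$.

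Next, Theorem \ref{teo-char-gamma-t} applies because $G$ has no isolated vertex (being connected of order $\ge 3$) and $H$ is nontrivial, giving $\gamma_t(G\circ H)=\gamma_t(G)$. Combining Proposition~\ref{prop-inequalities}(i) (specifically $\gamma_{t\{R2\}}(G\circ H)\le 2\gamma_t(G\circ H)$) with Theorem~\ref{teo-equality-TR2D-DD} (which asserts $\gamma_{\times 2}(G\circ H)=\gamma_{t\{R2\}}(G\circ H)$) yields the chain
\[
\gamma_{\times 2}(G\circ H)=\gamma_{t\{R2\}}(G\circ H)\le 2\gamma_t(G\circ H)=2\gamma_t(G)\le 2\left\lfloor\frac{2n}{3}\right\rfloor,
\]
which is precisely the required bound.

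There is no real obstacle here: the whole argument is a one-line concatenation of earlier theorems, and the only small subtlety is remembering that $\gamma_t(G)$ being an integer lets us replace $2n/3$ by $\lfloor 2n/3\rfloor$ before multiplying by $2$. I would also briefly note that the hypothesis $n\ge 3$ is needed solely to apply the Cockayne--Dawes--Hedetniemi bound, while connectedness of $G$ guarantees both that $G$ has no isolated vertex (so Theorems \ref{teo-equality-TR2D-DD} and \ref{teo-char-gamma-t} apply) and that the classical $\gamma_t$-bound is valid.
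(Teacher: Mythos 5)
Your argument is correct and is essentially the paper's own proof: the authors likewise derive the bound by combining the Cockayne--Dawes--Hedetniemi inequality $\gamma_t(G)\le \frac{2n}{3}$ with Proposition~\ref{prop-inequalities}(i), Theorem~\ref{teo-equality-TR2D-DD} and (implicitly) Theorem~\ref{teo-char-gamma-t}. Your only additions are the explicit remark that integrality lets one pass to $\lfloor 2n/3\rfloor$ and the explicit citation of Theorem~\ref{teo-char-gamma-t}, both of which the paper leaves tacit.
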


In order to show that the bound above is tight, we consider the case of rooted product graphs. Given a graph $G$  and a graph $H$ with root $v\in V(H)$, the rooted product $G\bullet_v H$ is defined as the graph obtained from $G$ and $H$ by taking one copy of $G$ and $|V(G)|$ copies of $H$ and identifying the  $i^{th}$ vertex of $G$ with  vertex $v$ in the $i^{th}$ copy of $H$ for every $i\in \{1,\dots,|V(G)|\}$. For instance, the graph $P_5\bullet_v P_3$ where $v$ is a leaf, is shown in Figure \ref{fig-P5-P2}. Later, when we read Lemma \ref{lemma-conditions-proj}, it will be easy to see that for $n=|V(G\bullet_v P_3)|=3|V(G)|$ we have that $\gamma_{\times 2}((G\bullet_v P_3)\circ H)=4|V(G)|=2\left\lfloor\frac{2n}{3}\right\rfloor$ whenever $\gamma(H)\geq 3$.

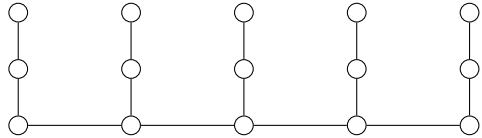
\begin{figure}[ht]
\centering
\begin{tikzpicture}[scale=1, transform shape]

\node [draw, shape=circle,inner sep=2.5pt ] (a2) at  (0,1.5) {};
\node [draw, shape=circle, inner sep=2.5pt] (a3) at  (0,0.75) {};
\node [draw, shape=circle, inner sep=2.5pt] (a4) at  (0,0) {};

\node [draw, shape=circle, inner sep=2.5pt] (b2) at  (1.5,1.5) {};
\node [draw,shape=circle,inner sep=2.5pt] (b3) at  (1.5,0.75) {};
\node [draw, shape=circle,inner sep=2.5pt] (b4) at  (1.5,0) {};

\node [draw, shape=circle,inner sep=2.5pt] (c2) at  (3,1.5) {};
\node [draw,shape=circle,inner sep=2.5pt] (c3) at  (3,0.75) {};
\node [draw, shape=circle,inner sep=2.5pt] (c4) at  (3,0) {};

\node [draw, shape=circle,inner sep=2.5pt] (d2) at  (4.5,1.5) {};
\node [draw,shape=circle,inner sep=2.5pt] (d3) at  (4.5,0.75) {};
\node [draw, shape=circle,inner sep=2.5pt] (d4) at  (4.5,0) {};

\node [draw, shape=circle,inner sep=2.5pt] (e2) at  (6,1.5) {};
\node [draw, shape=circle,inner sep=2.5pt] (e3) at  (6,0.75) {};
\node [draw, shape=circle,inner sep=2.5pt] (e4) at  (6,0) {};

\draw(a4)--(b4)--(c4)--(d4)--(e4);
\draw(a2)--(a3)--(a4);
\draw(b2)--(b3)--(b4);
\draw(c2)--(c3)--(c4);
\draw(d2)--(d3)--(d4);
\draw(e2)--(e3)--(e4);

\end{tikzpicture}
\caption{The graph $P_5\bullet_v P_3$}\label{fig-P5-P2}
\end{figure}

\begin{lemma}\label{obs-vertice<=2}
For any  graph $G$ with no isolated vertex and any nontrivial graph $H$, there exists a $\gamma_{\times 2}(G\circ H)$-set $S$ such that $|S\cap V(H_u)|\leq 2$, for every $u\in V(G)$.
\end{lemma}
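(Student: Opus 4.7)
The plan is an exchange argument organized around a potential function. Among all $\gamma_{\times 2}(G\circ H)$-sets I would pick $S$ minimizing $\Phi(S) = \sum_{u \in V(G)} \max\{|S_u| - 2,\, 0\}$, where $S_u := S \cap V(H_u)$, and show that $\Phi(S) = 0$. Suppose otherwise; then some $u \in V(G)$ satisfies $|S_u| \geq 3$, and I will produce a $\gamma_{\times 2}(G\circ H)$-set $S'$ with $\Phi(S') = \Phi(S) - 1$, contradicting the choice of $S$.

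Since $G$ has no isolated vertex, fix any $w \in N_G(u)$. The key structural input is that for every $u' \in N_G(u)$ and every $v \in V(H)$, the set $S_u$ is contained in $N[(u',v)]$ (via the $G$-edge $u u'$), so $|S \cap N[(u',v)]| \geq |S_u| \geq 3$. I would first use this bound to show $M := |S \cap (N_G(u) \times V(H))| \leq 1$. Indeed, if $M \geq 2$, then for any $(u,v^*) \in S_u$ the set $S \setminus \{(u,v^*)\}$ still double-dominates $G\circ H$: writing $X$ for the projection of $S_u$ onto $V(H)$, each $(u,v')$ with $v' \in N_H[v^*]$ keeps at least $|X \cap N_H[v']| + M - 1 \geq 1 + 2 - 1 = 2$ dominators (since $v^* \in X \cap N_H[v']$ gives $|X \cap N_H[v']| \geq 1$), each $(u',v')$ with $u' \in N_G(u)$ keeps at least $|S_u| - 1 \geq 2$, and every other vertex is unaffected, contradicting the minimality of $|S|$. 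In particular, $|S_w| \leq M \leq 1 < |V(H)|$, so some $v_0 \in V(H)$ satisfies $(w,v_0) \notin S$.

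Then I would define $S' = (S \setminus \{(u,v^*)\}) \cup \{(w,v_0)\}$ for any fixed $(u,v^*) \in S_u$; since $(w,v_0) \notin S$ we have $|S'| = |S|$. The verification that $S'$ is still a double dominating set reduces to the signed change $\Delta(x) = [(w,v_0) \in N[x]] - [(u,v^*) \in N[x]]$: for $x \in V(H_u)$ the first indicator is always $1$ (because $w \in N_G(u)$) while the second is at most $1$, so $\Delta(x) \geq 0$; for $x$ in $V(H_w)$ or in $(N_G(u) \setminus \{w\}) \times V(H)$ the worst case is $\Delta(x) = -1$, which is absorbed by the bound $|S \cap N[x]| \geq |S_u| \geq 3$; for $x$ outside $N_G[u] \times V(H)$ the second indicator vanishes and $\Delta(x) \geq 0$. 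Finally, $|S'_u| = |S_u| - 1 \geq 2$ decreases the $u$-term of $\Phi$ by exactly $1$, $|S'_w| \leq 2$ keeps the $w$-term at $0$, and every other term is unchanged, so $\Phi(S') = \Phi(S) - 1$. The main delicate point is the dichotomy on $M$: establishing $M \leq 1$ is what both secures a legal swap target $(w,v_0)$ and prevents the swap from lifting the $w$-term of $\Phi$ above $0$.
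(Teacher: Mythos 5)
Your proof is correct. The core exchange step is the same one the paper uses: given a copy $H_u$ with $|S_u|\ge 3$, either a deletion inside $H_u$ contradicts the minimality of $|S|$ (because the neighbouring copies already supply enough dominators), or every vertex of $G\circ H$ that could be hurt by removing $(u,v^*)$ is protected by $|S_u|\ge 3$, so the vertex can be relocated to a neighbouring copy. The difference is in how the descent is organized. The paper minimizes the number of ``bad'' copies $|S_3|$, observes that one swap need not decrease this quantity, and then argues by ``successively repeating the process'' after choosing $u$ with $|S\cap V(H_u)|$ minimum in $S_3$ --- a step that is left somewhat informal. Your potential $\Phi(S)=\sum_u\max\{|S_u|-2,0\}$ counts the total excess rather than the number of offending copies, so a single swap provably decreases it by exactly one (the $u$-term drops by one since $|S'_u|=|S_u|-1\ge 2$, and the $w$-term stays at zero because $|S_w|\le M\le 1$), and the contradiction is immediate. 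Your intermediate claim $M=|S\cap(N_G(u)\times V(H))|\le 1$ is also slightly sharper than the paper's per-neighbour bound $|S\cap V(H_{u'})|\le 1$, and it is exactly what guarantees both a legal swap target and control of the $w$-term. In short: same idea, but your bookkeeping closes the termination argument that the paper only sketches.
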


\begin{proof}
Given a double dominating set $S$ of  $G\circ H$, we define the set $S_3=\{x\in V(G): \, |S\cap V(H_x)|\ge 3\}$. Let $S$ be a $\gamma_{\times 2}(G\circ H)$-set such that $|S_3|$ is minimum among all $\gamma_{\times 2}(G\circ H)$-sets. If $|S_3|=0$, then we are done. Hence, we suppose that there exists $u\in S_3$ and let $(u,v)\in S$.  We assume that $|S\cap V(H_u)|$ is minimum among all vertices in $S_3$.
It is readily seen that if there exists $u'\in N(u)$ such that $|S\cap V(H_{u'})|\ge 2$, then $S'=S\setminus \{(u,v)\}$ is a double dominating set of $G\circ H$, which is a contradiction. Hence, if $u'\in N(u)$, then $|S\cap V(H_{u'})|\le  1$, and in this case it is not difficult to check that for $(u',v')\notin  S$ the set $S''=(S\setminus \{(u,v)\})\cup \{(u',v')\}$ is a  $\gamma_{\times 2}(G\circ H)$-set such that $|S''_3|$ is minimum among all $\gamma_{\times 2}(G\circ H)$-sets. If $|S''_3|<|S_3|$, then we obtain a contradiction, otherwise $u\in S''_3$  and  $|S''\cap V(H_u)|$ is minimum among all vertices in $S''_3$, so that we can successively repeat this process, until obtaining a contradiction.
 Therefore, the result follows.  
\end{proof}

\begin{theorem}\label{teo-bounds-conditions-H}
Let $G$ be a graph with no isolated vertex and let $H$ be a nontrivial graph.
\begin{enumerate}[{\rm (i)}]
\item If $\gamma(H)=1$, then $\gamma_{\times 2}(G\circ H)\leq \gamma_{t\{R2\}}(G)$.
\item If $H$ has at least two universal vertices, then $\gamma_{\times 2}(G\circ H)\leq 2\gamma(G)$.
\item If $H$ has exactly one universal vertex, then $\gamma_{\times 2}(G\circ H)=\gamma_{t\{R2\}}(G)$.
\item If $\gamma(H)\geq 2$, then $\gamma_{\times 2}(G\circ H)\geq \gamma_{t\{R2\}}(G)$.
\end{enumerate}
\end{theorem}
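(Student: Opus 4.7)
For parts (i) and (ii), I would build explicit double dominating sets of $G\circ H$. For (i), start with a $\gamma_{t\{R2\}}(G)$-function $f(V_0,V_1,V_2)$, fix a universal vertex $v$ of $H$, and fix any $v'\in V(H)\setminus\{v\}$, and set
$$S=(V_1\times\{v\})\cup (V_2\times\{v,v'\}),$$
so that $|S|=|V_1|+2|V_2|=\gamma_{t\{R2\}}(G)$. Verification that $S$ double dominates splits by the value $f(x)$ for $(x,y)\in V(G\circ H)$: if $x\in V_1\cup V_2$, then $(x,v)\in S$ dominates $(x,y)$ since $v$ is universal in $H$, while the total domination of $G$ by $V_1\cup V_2$ supplies $u\in N_G(x)\cap(V_1\cup V_2)$ for which $(u,v)\in S$ is a second dominator; if $x\in V_0$, then $f(N_G(x))\ge 2$ gives at least two elements of $S$ in $N_G(x)\times\{v,v'\}$, each dominating $(x,y)$ via the $G$-edge from $x$. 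For (ii), taking a $\gamma(G)$-set $D$ and two universal vertices $v_1,v_2$ of $H$, the set $S=D\times\{v_1,v_2\}$ has size $2\gamma(G)$ and double dominates, since $v_1,v_2$ are universal in each $H_u$ with $u\in D$ and each vertex of $V(G)\setminus D$ has a neighbor in $D$.

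For the common lower bound in (iii) and (iv), I would use Lemma \ref{obs-vertice<=2} to pick a $\gamma_{\times 2}(G\circ H)$-set $S$ with $|S\cap V(H_u)|\le 2$ for every $u\in V(G)$, and define $f\colon V(G)\to\{0,1,2\}$ by $f(u)=|S\cap V(H_u)|$, noting $\omega(f)=|S|=\gamma_{\times 2}(G\circ H)$. The claim is that $f$ is a TR2DF on $G$. For the $V_0$ condition, if $u\in V_0$ then $S\cap V(H_u)=\emptyset$, and the double domination of any $(u,y)$ forces at least two elements of $S$ to lie in $N_G(u)\times V(H)$, giving $f(N_G(u))\ge 2$. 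For the total-domination requirement, suppose for contradiction that some $u\in V(G)$ has no $G$-neighbor in $V_1\cup V_2$; then for every $y\in V(H)$ the two required dominators of $(u,y)$ must lie inside $V(H_u)$, so $T=\{y'\in V(H):(u,y')\in S\}$ is a double dominating set of $H$ with $|T|\le 2$, whence $\gamma_{\times 2}(H)\le 2$ and Theorem \ref{teo-x2=2} forces $H$ to have at least two universal vertices. This contradicts the hypothesis of (iv) (where $\gamma(H)\ge 2$ forbids any universal vertex) and equally the hypothesis of (iii) (where $H$ has exactly one universal vertex). Hence $f$ is a TR2DF and $\gamma_{t\{R2\}}(G)\le\omega(f)=\gamma_{\times 2}(G\circ H)$, establishing (iv) and the lower bound of (iii); the reverse inequality in (iii) is the instance of (i) in which $\gamma(H)=1$.

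The step I expect to be the main obstacle is the total-domination check in the lower-bound argument, because it is exactly the point where the structural bound $|S\cap V(H_u)|\le 2$ from Lemma \ref{obs-vertice<=2} has to be converted, via Theorem \ref{teo-x2=2}, into the statement that $H$ has two universal vertices, which is precisely what the hypotheses of (iii) and (iv) are designed to rule out.
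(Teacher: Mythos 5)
Your proof is correct and follows essentially the same route as the paper: the upper bounds in (i) and (ii) come from the same constructions (the paper phrases (i) as a TR2DF on $G\circ H$ supported on the universal-vertex copy and invokes Theorem \ref{teo-equality-TR2D-DD}, whereas you split the weight of $V_2$ over two vertices to get a double dominating set directly, which is the same idea), and the lower bound common to (iii) and (iv) is exactly the paper's argument via Lemma \ref{obs-vertice<=2}, the projection $g(u)=|S\cap V(H_u)|$, and Theorem \ref{teo-x2=2} to exclude $\gamma_{\times 2}(H)\le 2$.
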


\begin{proof}
Let $f$ be a $\gamma_{t\{R2\}}(G)$-function and let $v$ be a universal vertex of $H$. Let $f'$ be the function defined by $f'(u,v)=f(u)$ for every $u\in V(G)$ and $f'(x,y)=0$ whenever $x\in V(G)$ and $y\in V(H)\setminus \{v\}$. It is readily seen  that $f'$ is a TR2DF on $G\circ H$. Hence, by Theorem \ref{teo-equality-TR2D-DD} we conclude that $\gamma_{\times 2}(G\circ H)=\gamma_{t\{R2\}}(G\circ H)\leq \omega(f')=\omega(f)=\gamma_{t\{R2\}}(G)$ and (i) follows. 

Let $D$ be a $\gamma(G)$-set and let $y_1,y_2$ be two universal vertices of $H$. It is not difficult to see that $S=D\times \{y_1,y_2\}$  is a double dominating set of $G\circ H$. Therefore, $\gamma_{\times 2}(G\circ H)\leq |S|=2\gamma(G)$ and (ii) follows.

From now on, let $S$ be a $\gamma_{\times 2}(G\circ H)$-set that satisfies Lemma \ref{obs-vertice<=2} and assume that either $\gamma(H)\geq 2$ or $H$ has exactly one universal vertex.  Let $g(V_0,V_1,V_2)$ be the function defined by $g(u)=|S\cap V(H_u)|$  for every $u\in V(G)$. We claim that $g$ is a TR2DF on $G$. It is clear  that every vertex in $V_1$ has to be adjacent to some vertex in  $V_1\cup V_2$ and, if $\gamma(H)\geq 2$ or $H$ has exactly one universal vertex, then  by Theorem \ref{teo-x2=2} we have that $\gamma_{\times 2}(H)\geq 3$, which implies that every vertex in $V_2$ has to be adjacent to some vertex in  $V_1\cup V_2$. Hence,  $V_1\cup V_2$ is a total dominating set of $G$. Now, if $x\in V_0$, then $S\cap V(H_x)=\emptyset$, and so $|N(V(H_x))\cap S|\geq 2$. Thus, $g(N(x))\geq 2$, which implies that $g$ is TR2DF on $G$ and so $\gamma_{t\{R2\}}(G)\leq \omega(g)=|S|=\gamma_{\times 2}(G\circ H)$. Therefore, (iii) and (iv) follow.
\end{proof}

The following result is a direct consequence of Theorems \ref{teo-bounds-x2} and \ref{teo-bounds-conditions-H}. Recall that $\gamma_{\times 2}(H)=2$ if and only if $H$ has at least two universal vertices (see Theorem \ref{teo-x2=2}). 

\begin{theorem}\label{teo-G-gamma=1-consequences}
Let $G$ be  a graph with no isolated vertex and let $H$ be a  nontrivial graph. 
\begin{enumerate}[{\rm (i)}]
\item If $\gamma(G)=\rho(G)$ and $\gamma_{\times 2}(H)=2$, then $\gamma_{\times 2}(G\circ H)=2\gamma(G).$
\item   If  $\gamma_{t\{R2\}}(G)\in \{\gamma_t(G),2\rho(G)\}$ and $\gamma(H)=1$, then $\gamma_{\times 2}(G\circ H)=\gamma_{t\{R2\}}(G).$
 \item  If $\gamma_{t\{R2\}}(G)=2\gamma_t(G)$ and $\gamma(H)\geq 2$, then $\gamma_{\times 2}(G\circ H)=\gamma_{t\{R2\}}(G)$.
 \end{enumerate}
\end{theorem}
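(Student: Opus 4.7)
The plan is to prove each of the three items by sandwiching $\gamma_{\times 2}(G\circ H)$ between matching lower and upper bounds drawn from Theorems \ref{teo-bounds-x2} and \ref{teo-bounds-conditions-H}; as the author signals, no fresh construction is required. I would organize the argument item by item, ordering the hypotheses so that each supplies exactly one bound.

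For (i), I would first invoke Theorem \ref{teo-x2=2} to translate $\gamma_{\times 2}(H)=2$ into the statement that $H$ has at least two universal vertices, so that Theorem \ref{teo-bounds-conditions-H}(ii) yields the upper bound $\gamma_{\times 2}(G\circ H)\le 2\gamma(G)$. For the matching lower bound, I would apply Theorem \ref{teo-bounds-x2} together with the hypothesis $\gamma(G)=\rho(G)$ to get $\gamma_{\times 2}(G\circ H)\ge 2\rho(G)=2\gamma(G)$. The two inequalities force equality.

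For (ii), the hypothesis $\gamma(H)=1$ combined with Theorem \ref{teo-bounds-conditions-H}(i) provides the universal upper bound $\gamma_{\times 2}(G\circ H)\le \gamma_{t\{R2\}}(G)$. For the lower bound I would split into the two subcases allowed by the hypothesis on $\gamma_{t\{R2\}}(G)$: if $\gamma_{t\{R2\}}(G)=\gamma_t(G)$, use $\gamma_{\times 2}(G\circ H)\ge \gamma_t(G)$ from Theorem \ref{teo-bounds-x2}; if $\gamma_{t\{R2\}}(G)=2\rho(G)$, use $\gamma_{\times 2}(G\circ H)\ge 2\rho(G)$ from the same theorem. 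Either way the lower bound matches the upper one.

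For (iii), the hypothesis $\gamma(H)\ge 2$ together with Theorem \ref{teo-bounds-conditions-H}(iv) gives the lower bound $\gamma_{\times 2}(G\circ H)\ge \gamma_{t\{R2\}}(G)$, while Theorem \ref{teo-bounds-x2} produces $\gamma_{\times 2}(G\circ H)\le 2\gamma_t(G)$, which equals $\gamma_{t\{R2\}}(G)$ by the remaining hypothesis. Equality again follows. The only point that requires any care, and arguably the ``main obstacle'' in an otherwise bookkeeping-style proof, is item (i): one must remember to pass through Theorem \ref{teo-x2=2} in order to convert the numerical assumption $\gamma_{\times 2}(H)=2$ into the structural assumption on universal vertices demanded by Theorem \ref{teo-bounds-conditions-H}(ii); everything else is a direct substitution.
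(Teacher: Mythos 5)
Your proposal is correct and matches the paper exactly: the paper presents this theorem as a direct consequence of Theorems \ref{teo-bounds-x2} and \ref{teo-bounds-conditions-H}, with the same reminder that $\gamma_{\times 2}(H)=2$ if and only if $H$ has at least two universal vertices (Theorem \ref{teo-x2=2}) needed for item (i). Your item-by-item pairing of upper and lower bounds is precisely the intended argument.
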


It is well known that $\gamma(T)=\rho(T)$ for any tree $T$. Hence, the following corollary is a direct consequence of Theorem \ref{teo-G-gamma=1-consequences}.

\begin{corollary}\label{cor-closed-formulae-st=2}
For any tree $T$ and any graph $H$ with $\gamma_{\times 2}(H)=2$, $$\gamma_{\times 2}(T\circ H)= 2\gamma(T).$$
\end{corollary}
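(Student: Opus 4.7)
The plan is to obtain this corollary as an immediate specialization of Theorem \ref{teo-G-gamma=1-consequences} (i). That theorem asserts that whenever $\gamma(G)=\rho(G)$ and $\gamma_{\times 2}(H)=2$, one has $\gamma_{\times 2}(G\circ H)=2\gamma(G)$. Since the hypothesis $\gamma_{\times 2}(H)=2$ is built into the corollary, the only thing to check in order to apply Theorem \ref{teo-G-gamma=1-consequences} (i) with $G=T$ is the equality $\gamma(T)=\rho(T)$ for every tree $T$ (with no isolated vertex, i.e., every nontrivial tree).

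The identity $\gamma(T)=\rho(T)$ is the classical Meir--Moon equality and is expressly flagged by the authors in the sentence preceding the corollary. A quick self-contained verification goes by induction on $|V(T)|$: root $T$ at an arbitrary leaf, pick a deepest leaf $\ell$, let $p$ be its parent, place $p$ into both the dominating set and the $2$-packing, and then delete $N[p]$ from $T$; the components that remain are trees of smaller order (plus possibly isolated vertices, which are both dominated and $2$-packing independent by trivial singletons). Applying the induction hypothesis to each component gives a dominating set and a $2$-packing of the same size. Since the inequality $\rho(G)\le \gamma(G)$ holds for every graph, the two invariants coincide.

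Once $\gamma(T)=\rho(T)$ is in place, Theorem \ref{teo-G-gamma=1-consequences} (i) applies directly and yields $\gamma_{\times 2}(T\circ H)=2\gamma(T)$. The only boundary situation is $T\cong K_1$, which has an isolated vertex and therefore falls outside the hypotheses of Theorem \ref{teo-G-gamma=1-consequences}; but in that degenerate case $T\circ H\cong H$, so $\gamma_{\times 2}(T\circ H)=\gamma_{\times 2}(H)=2=2\gamma(T)$ and the formula still holds. There is no real obstacle in this proof: the work has already been done in Theorem \ref{teo-G-gamma=1-consequences}, and all that remains is to cite (or briefly reprove) the well-known tree identity $\gamma(T)=\rho(T)$.
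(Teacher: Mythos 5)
Your proof follows exactly the paper's route: the authors likewise observe that $\gamma(T)=\rho(T)$ for every tree (the classical Meir--Moon identity) and then invoke Theorem \ref{teo-G-gamma=1-consequences} (i); your handling of the degenerate case $T\cong K_1$ is a harmless extra. One small slip in your optional inductive verification of $\gamma(T)=\rho(T)$: after choosing a deepest leaf $\ell$ with parent $p$, the vertex to place in the $2$-packing should be $\ell$, not $p$, since a surviving vertex $w$ outside $N[p]$ may satisfy $d(p,w)=2$ (so $N[p]\cap N[w]\neq\emptyset$), whereas every path from $\ell$ to $w$ passes through $p$ and hence $d(\ell,w)\geq 3$. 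This does not affect the corollary, as the identity itself is standard and the paper simply cites it as well known.
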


A \emph{double total dominating set} of a graph $G$ is a set $S$ of vertices of $G$ such that every vertex in $V(G)$ is adjacent to at least two vertices in $S$  \cite{book-total-dom}. The \emph{double total domination number} of $G$, denoted by $\gamma_{2,t}(G)$, is the minimum cardinality among all double total dominating sets.

\begin{theorem}{\rm \cite{Valveny2017}}\label{DoubletotalLexicographico}
If $G$ is a graph of minimum degree greater than or equal to two, then for any graph $H$,
$$\gamma_{2,t}(G\circ H)\leq \gamma_{2,t}(G).$$
\end{theorem}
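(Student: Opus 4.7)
The plan is to exhibit an explicit double total dominating set of $G\circ H$ whose size equals $\gamma_{2,t}(G)$, by lifting an optimal double total dominating set of $G$ along a single "slice" of $H$.

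First I would fix a $\gamma_{2,t}(G)$-set $D\subseteq V(G)$; such a set exists precisely because $\delta(G)\ge 2$, which is the only role of the hypothesis on $G$ (if some vertex of $G$ had degree at most one it could never acquire two neighbours in any set). Then I would pick an arbitrary vertex $v_0\in V(H)$ and consider the "horizontal copy"
\[
S=D\times\{v_0\}\subseteq V(G\circ H),
\]
so that $|S|=|D|=\gamma_{2,t}(G)$. The whole task is to verify that $S$ is a double total dominating set of $G\circ H$.

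To check this, take an arbitrary vertex $(x,y)\in V(G\circ H)$. Since $D$ is a double total dominating set of $G$, the vertex $x$ has two distinct neighbours $u_1,u_2\in D$ in $G$, regardless of whether $x\in D$ or not. Because $xu_i\in E(G)$ forces $x\ne u_i$, the definition of the lexicographic product gives $(x,y)(u_i,v_0)\in E(G\circ H)$ for $i=1,2$. Thus $(u_1,v_0),(u_2,v_0)\in S$ are two distinct neighbours of $(x,y)$ in $S$, so $(x,y)$ is dominated at least twice by $S$. This shows $S$ is a double total dominating set, hence $\gamma_{2,t}(G\circ H)\le |S|=\gamma_{2,t}(G)$.

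There is no real obstacle: everything reduces to the observation that any $G$-edge $xu$ is automatically promoted to an edge $(x,y)(u,v_0)$ for every $y\in V(H)$, so a single slice $D\times\{v_0\}$ inherits the double total domination property of $D$. The minimum-degree hypothesis merely ensures that $\gamma_{2,t}(G)$ is finite, and the argument makes no use of the structure of $H$ whatsoever, which is why the bound holds for every graph $H$.
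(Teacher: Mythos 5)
Your proof is correct. The paper itself states this result without proof, importing it from the cited reference, so there is nothing to compare against in the text; your argument --- lifting a $\gamma_{2,t}(G)$-set $D$ to the slice $D\times\{v_0\}$ and noting that every edge $xu_i$ of $G$ yields the edge $(x,y)(u_i,v_0)$ in $G\circ H$ --- is the natural and complete one, and you correctly identify that the hypothesis $\delta(G)\ge 2$ serves only to guarantee that a double total dominating set of $G$ exists.
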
 

\begin{theorem}\label{teo-bounds-x2-2,t}
Let $G$ be a graph of minimum degree greater than or equal to two and order $n$. The following statements hold.

\begin{enumerate}[{\rm (i)}]
\item For any graph $H$, $\gamma_{\times 2}(G\circ H)\leq \gamma_{2,t}(G).$ 
\item For any graph $H$,  $\gamma_{\times 2}(G\circ H)\leq n.$
\end{enumerate}
\end{theorem}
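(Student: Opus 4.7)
The plan is to derive both inequalities from two simple facts: first, that every double total dominating set is automatically a double dominating set, and second, that the full vertex set $V(G)$ itself is a double total dominating set whenever $\delta(G)\geq 2$.

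For part (i), I would begin with the observation that for an arbitrary graph $G'$ one has $\gamma_{\times 2}(G')\leq \gamma_{2,t}(G')$. Indeed, if $S$ is a double total dominating set of $G'$, then every vertex $v\in V(G')$ has at least two neighbours in $S$, so it is dominated at least twice by vertices in $S\setminus\{v\}$; in particular $v$ is dominated at least twice by $S$, which shows that $S$ is a double dominating set. Applying this to $G'=G\circ H$ and combining with Theorem~\ref{DoubletotalLexicographico}, which requires precisely the hypothesis $\delta(G)\geq 2$, yields
\[
\gamma_{\times 2}(G\circ H)\leq \gamma_{2,t}(G\circ H)\leq \gamma_{2,t}(G),
\]
as required.

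For part (ii), the cleanest route is to note that $V(G)$ is itself a double total dominating set of $G$: every vertex of $G$ has at least two neighbours because $\delta(G)\geq 2$, and all those neighbours lie in $V(G)$. Therefore $\gamma_{2,t}(G)\leq n$, and combining with (i) gives $\gamma_{\times 2}(G\circ H)\leq n$. Alternatively, one can argue directly by fixing any $y\in V(H)$ and taking $S=V(G)\times\{y\}$; then $|S|=n$ and for every vertex $(u,y')\in V(G\circ H)$ the two (or more) vertices $(u',y)$ with $u'\in N_G(u)$ belong to $S$ and are adjacent to $(u,y')$ in $G\circ H$ (since adjacency in the first coordinate suffices), so $S$ double dominates $G\circ H$.

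There is essentially no technical obstacle in this argument; the only point that must be handled with a little care is the verification that double total domination implies double domination, because the definitions use slightly different flavours of adjacency (one counts the vertex itself, the other does not). Once that is noted, both inequalities follow from results already stated in the paper together with the elementary observation about $V(G)$.
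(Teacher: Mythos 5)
Your proposal is correct and follows essentially the same route as the paper: part (i) via the observation that every double total dominating set is a double dominating set combined with Theorem~\ref{DoubletotalLexicographico}, and part (ii) via $\gamma_{2,t}(G)\leq n$ under the hypothesis $\delta(G)\geq 2$. The extra direct verification with $S=V(G)\times\{y\}$ is a harmless addition but not needed.
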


\begin{proof}
Since every double total dominating set is a double dominating set, we deduce that $\gamma_{\times 2}(G\circ H)\leq \gamma_{2,t}(G\circ H)$. Hence, from Theorem \ref{DoubletotalLexicographico} we deduce (i).  Finally, since $\gamma_{2,t}(G)\le n$, from (i) we deduce (ii).
\end{proof}

The following family $\mathcal{H}_k$ of graphs was shown in \cite{Valveny2017}. A graph $G$ belongs to $\mathcal{H}_k$ if and only if  it is constructed from a cycle $C_k$ and $k$ empty graphs $N_{s_1},\dots, N_{s_k}$ of order $s_1,\dots,s_k$, respectively, and joining by an edge each vertex from $N_{s_i}$ with the  vertices $v_i$ and $v_{i+1}$ of $C_k$. Here we are assuming that $v_i$ is adjacent to $v_{i+1}$ in $C_k$, where the subscripts are taken modulo $k$. Figure \ref{The graphs in H}  shows a graph $G$ belonging to $\mathcal{H}_k$, where $k=4$, $s_1=s_3=3$ and  $s_2=s_4=2$.

Notice that $\gamma_{t\{R2\}}(G)=\gamma_{2,t}(G)$, for every $G\in \mathcal{H}_k$. Hence, from Theorems \ref{teo-bounds-conditions-H} (iv) and \ref{teo-bounds-x2-2,t} (i) we deduce that $\gamma_{\times 2}(G\circ H)= \gamma_{2,t}(G)$ for any $G\in \mathcal{H}_k$ and any graph $H$ such that $\gamma(H)\geq 2$.
  
\begin{figure}[ht]
\centering
\begin{tikzpicture}[scale=1, transform shape]
\node [draw, fill=black, shape=circle, inner sep=2.5pt] (c1) at  (0,0) {};
\node [draw, fill=black, shape=circle,inner sep=2.5pt ] (c2) at  (1.5,0) {};
\node [draw, fill=black, shape=circle, inner sep=2.5pt] (c3) at  (1.5,1.5) {};
\node [draw, fill=black, shape=circle, inner sep=2.5pt] (c4) at  (0,1.5) {};
\node [draw, shape=circle, inner sep=2.5pt] (b1) at  (2.5,0.75) {};
\node [draw, shape=circle,inner sep=2.5pt ] (b2) at  (3.5,0.75) {};
\node [draw, shape=circle, inner sep=2.5pt] (b3) at (4.5,0.75) {};
\node [draw, shape=circle, inner sep=2.5pt] (bb1) at (-1,0.75) {};
\node [draw, shape=circle, inner sep=2.5pt] (bb2) at  (-2,0.75) {};
\node [draw, shape=circle,inner sep=2.5pt ] (bb3) at  (-3,0.75) {};
\node [draw, shape=circle, inner sep=2.5pt] (a1) at (0.75,2.5) {};
\node [draw, shape=circle,inner sep=2.5pt ] (a2) at  (0.75,3.5) {};
\node [draw, shape=circle, inner sep=2.5pt] (aa1) at  (0.75,-1) {};
\node [draw, shape=circle, inner sep=2.5pt] (aa2) at  (0.75,-2) {};

\draw(c1)--(c2)--(c3)--(c4)--(c1);

\draw(c1)--(bb1)--(c4)--(bb2)--(c1)--(bb3)--(c4);

\draw(c2)--(b1)--(c3)--(b2)--(c2)--(b3)--(c3);

\draw(c1)--(aa1)--(c2)--(aa2)--(c1);

\draw(c4)--(a1)--(c3)--(a2)--(c4);
\end{tikzpicture}
\caption{The set of black-coloured vertices is a $\gamma_{2,t}(G)$-set.}\label{The graphs in H}
\end{figure}
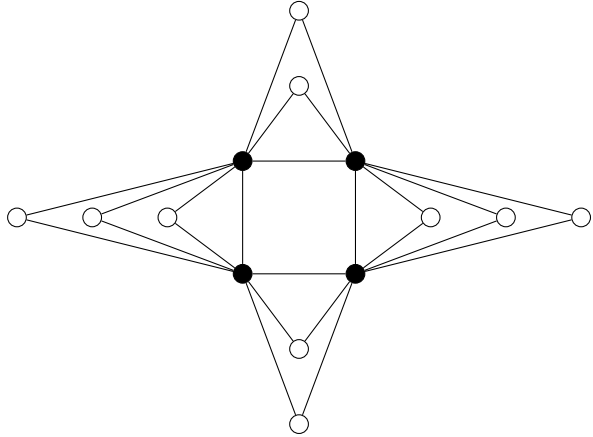


\section{Small values of $\gamma_{\times 2}(G\circ H)$}\label{SectionCharacterizationSmallValues}
First, we characterize the graphs with $\gamma_{\times 2}(G\circ H)=2$.

\begin{theorem}\label{teo-char-st=2}
For any nontrivial graph $G$ and any graph $H$, the following statements are equivalent.

\begin{enumerate}[{\rm (i)}]
\item $\gamma_{\times 2}(G\circ H)=2$. 
\item $\gamma(G)=\gamma(H)=1$ and $(\gamma_{\times 2}(G)=2$ or $\gamma_{\times 2}(H)=2)$.
\end{enumerate}  
\end{theorem}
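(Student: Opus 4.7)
The plan is to translate $\gamma_{\times 2}(\cdot)=2$ in all three graphs into the language of universal vertices via Theorem \ref{teo-x2=2}, and then characterise the universal vertices of $G\circ H$ in terms of those of the factors. The first step is to note that Theorem \ref{teo-x2=2} tells us $\gamma_{\times 2}(X)=2$ iff $X$ has at least two universal vertices, while $\gamma(X)=1$ iff $X$ has at least one universal vertex. Thus the theorem reduces to showing that $G\circ H$ has at least two universal vertices if and only if both $G$ and $H$ have a universal vertex and at least one of them has two.

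The key lemma, which I would derive directly from the edge-definition of the lexicographic product, is that $(u,v)\in V(G\circ H)$ is universal in $G\circ H$ if and only if $u$ is universal in $G$ and $v$ is universal in $H$. For sufficiency, every $(x,y)\neq(u,v)$ is either in some fibre $H_x$ with $x\neq u$, in which case $ux\in E(G)$ makes $(u,v)(x,y)$ an edge, or $x=u$ and $y\neq v$, in which case $vy\in E(H)$ does. For necessity, if $u$ fails to be universal in $G$, any non-neighbour $x$ of $u$ together with any $y\in V(H)$ gives a non-neighbour $(x,y)$ of $(u,v)$; if $v$ fails to be universal in $H$, then $(u,v)$ and $(u,y)$ are non-adjacent for any non-neighbour $y$ of $v$ in $H$.

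Granting this characterisation, the rest is a short case analysis. Given two distinct universal vertices $(u_1,v_1),(u_2,v_2)$ of $G\circ H$, both $u_i$ are universal in $G$ and both $v_i$ are universal in $H$; since the pairs differ, either $u_1\neq u_2$ or $v_1\neq v_2$, and the factor in which the coordinates differ has two universal vertices while the other has at least one. Conversely, if $G$ has universal vertices $u_1\neq u_2$ and $H$ has a universal vertex $v$, then $(u_1,v),(u_2,v)$ are two universal vertices in $G\circ H$, and symmetrically when the roles of $G$ and $H$ are swapped. Translating back through Theorem \ref{teo-x2=2} delivers the stated equivalence.

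I do not anticipate any serious obstacle: the only conceptual ingredient is the characterisation of universal vertices of a lexicographic product, which is immediate from the adjacency definition. Everything else is bookkeeping and a symmetric case split, so the proof should be short.
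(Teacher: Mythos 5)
Your proposal is correct and follows essentially the same route as the paper: the paper's proof is a one-line appeal to Theorem \ref{teo-x2=2} together with the observation that $G\circ H$ has at least two universal vertices precisely under condition (ii), which is exactly the reduction you carry out. Your write-up merely supplies the details (the characterisation of universal vertices of $G\circ H$ and the case split) that the paper leaves as ``Notice that\dots''.
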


\begin{proof}
Notice that $G\circ H$ has at least two universal vertices if and only if  $\gamma(G)=\gamma(H)=1$, and  also $G$ has at least two universal vertices  or $H$ has at least two universal vertices. Hence,  by Theorem \ref{teo-x2=2} we conclude that (i) and (ii) are equivalent.
\end{proof}

Next, we characterize the graphs that satisfying $\gamma_{\times 2}(G\circ H)=3$. Before we shall need the following definitions.
For a set $S\subseteq V(G\circ H)$ we define the following subsets of $V(G)$.
$$\mathcal{A}_S= \{v\in V(G):\, |S\cap V(H_v)|\ge 2\};$$
$$\mathcal{B}_S= \{v\in V(G):\, |S\cap V(H_v)|=1\};$$
$$\mathcal{C}_S= \{v\in V(G):\, S\cap V(H_v)=\emptyset\}.$$

\begin{theorem}\label{teo-char-st=3}
For any nontrivial graphs $G$ and $H$, $\gamma_{\times 2}(G\circ H)=3$ if and only if one of the following conditions is satisfied.

\begin{enumerate}[{\rm (i)}]
\item $G\cong P_2$ and $\gamma(H)=2$.
\item $G\not\cong P_2$ has at least two universal vertices and $\gamma(H)\geq 2$.
\item $G$ has exactly one universal vertex and either  $\gamma(H)=2$ or $H$ has exactly one universal vertex.
\item $G$ has exactly one universal vertex, $\gamma_{2,t}(G)=3$ and $\gamma(H)\ge 3$.

\item $\gamma(G)=2$ and  $\gamma_{2,t}(G)=3$.
\item $\gamma(G)=2$, $\gamma_{\times 2}(G)=3<\gamma_{2,t}(G)$ and $\gamma(H)=1$.
\end{enumerate}
\end{theorem}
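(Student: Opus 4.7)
The plan is to prove both implications by analyzing a $\gamma_{\times 2}(G\circ H)$-set of size three and constructing explicit size-three double dominating sets in each case. Throughout, Theorem~\ref{teo-char-st=2} separates the value~$3$ from the value~$2$, while Lemma~\ref{obs-vertice<=2} allows restriction to sets $S$ with $|S\cap V(H_u)|\leq 2$ for every $u\in V(G)$. For such an $S$ with $|S|=3$, either $|\mathcal{A}_S|=1$ and $|\mathcal{B}_S|=1$ (Type~B) or $|\mathcal{A}_S|=0$ and $|\mathcal{B}_S|=3$ (Type~A).

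For the necessity in Type~B, I write $S=\{(u,v_1),(u,v_2),(w,z)\}$ with $u\neq w$. Double-domination of $(w,z)$ forces $uw\in E(G)$; double-domination of $(x,y)$ for any $x\notin\{u,w\}$ forces $xu\in E(G)$ (since $(w,z)$ alone contributes at most one to $|N[(x,y)]\cap S|$), hence $u$ is universal in $G$ whenever $|V(G)|\geq 3$; and double-domination of $(u,y)$ for $y\notin\{v_1,v_2\}$ forces $\{v_1,v_2\}$ to dominate $H$, so $\gamma(H)\leq 2$. Splitting on whether $G\cong P_2$, $G$ has exactly one universal vertex, or $G$ has at least two universal vertices, and using Theorem~\ref{teo-char-st=2} to exclude $\gamma_{\times 2}(G\circ H)=2$, places $(G,H)$ in (i), (ii) with $\gamma(H)=2$, or (iii).

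For the necessity in Type~A, I write $S=\{(u_1,v_1),(u_2,v_2),(u_3,v_3)\}$ with distinct $u_i$. Double-domination of the $(u_i,v_i)$ and of $(x,y)$ for $x\notin\{u_1,u_2,u_3\}$ yields that $\{u_1,u_2,u_3\}$ is a double dominating set of $G$, while double-domination of $(u_i,y)$ for $y\neq v_i$ yields that if $u_i$ has only one neighbour in $\{u_1,u_2,u_3\}$, then $v_i$ is universal in $H$. Hence the induced subgraph on $\{u_1,u_2,u_3\}$ is $K_3$ or $P_3$. If it is $K_3$, then $\{u_1,u_2,u_3\}$ is a double total dominating set, so $\gamma_{2,t}(G)=3$ and $\gamma(G)\leq 2$; the subcases by the number of universal vertices of $G$ and by $\gamma(H)$, after excluding value~$2$, land in (ii), (iii), (iv), or (v). If it is $P_3$, with $u_2$ as the middle vertex, then $v_1$ and $v_3$ are universal in $H$, so $\gamma_{\times 2}(H)=2$; Theorem~\ref{teo-char-st=2} forces $\gamma(G)\geq 2$, and using that $\{u_1,u_2\}$ dominates $G$ one deduces $\gamma(G)=2$ and $\gamma_{\times 2}(G)=3$. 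The dichotomy $\gamma_{2,t}(G)=3$ versus $\gamma_{2,t}(G)>3$ then places $(G,H)$ in (v) or (vi).

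For the sufficiency, in (i)--(iii) I build a Type-B set with $u$ a universal vertex of $G$, $w\in N(u)$, and $\{v_1,v_2\}$ a dominating set of $H$ of size at most two (using the unique universal vertex of $H$ in the second alternative of (iii)); direct verification shows $S$ is a DDS. In (iv) and (v) I build a Type-A set with $\{u_1,u_2,u_3\}$ a $\gamma_{2,t}(G)$-set and arbitrary second coordinates. For (vi), Theorem~\ref{teo-bounds-conditions-H}(i) and Proposition~\ref{prop-inequalities}(ii) give $\gamma_{\times 2}(G\circ H)\leq \gamma_{t\{R2\}}(G)\leq \gamma_{\times 2}(G)=3$. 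In every case the lower bound $\gamma_{\times 2}(G\circ H)\geq 3$ follows from Theorem~\ref{teo-char-st=2}, because the hypotheses preclude $G\circ H$ from having two universal vertices. The main obstacle is the Type-A $P_3$ subcase, where the forced structure on $\{u_1,u_2,u_3\}$ in $G$ together with the universal vertices of $H$ must be shown to collapse precisely to (v) or (vi) via the $\gamma_{2,t}(G)$ dichotomy.
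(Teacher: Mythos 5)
Your overall strategy---normalize a minimum double dominating set via Lemma~\ref{obs-vertice<=2} and split according to whether its projection to $G$ has type $(|\mathcal{A}_S|,|\mathcal{B}_S|)=(1,1)$ or $(0,3)$---is close in spirit to the paper's, but the paper splits first on $\gamma(G)\in\{1,2\}$, and that difference is what saves it from the following gap. In your Type-A subcase with $\langle\{u_1,u_2,u_3\}\rangle\cong P_3$ you conclude that $v_1$ and $v_3$ are universal in $H$, hence $\gamma_{\times 2}(H)=2$, hence (via Theorem~\ref{teo-char-st=2}) $\gamma(G)\geq 2$, landing in (v) or (vi). But nothing prevents $v_1=v_3$: then you only know that $H$ has \emph{one} universal vertex, $\gamma_{\times 2}(H)$ may exceed $2$, and $\gamma(G)$ may equal $1$. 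Concretely, take $G\cong P_3$ and $H\cong K_{1,3}$ with centre $c$. Here $\gamma_{\times 2}(G\circ H)=3$ and the pair satisfies (iii); yet $V(P_3)\times\{c\}$ is a minimum double dominating set satisfying Lemma~\ref{obs-vertice<=2}, and it is exactly your Type-A/$P_3$ configuration with $v_1=v_2=v_3=c$. Your argument, applied to this $S$, would force the pair into (v) or (vi), both of which require $\gamma(G)=2$ and are false here. Since your proof must handle whichever normalized $S$ Lemma~\ref{obs-vertice<=2} provides, the Type-A/$P_3$ branch needs a further split on whether $G$ has a universal vertex (this is precisely what the paper's initial dichotomy $\gamma(G)=1$ versus $\gamma(G)=2$ accomplishes: the $C_3$/$P_3$ analysis only ever happens under the standing assumption $\gamma(G)=2$).

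There is a second, more easily repaired, flaw in the sufficiency direction: for condition (ii) you propose a Type-B witness built from a dominating set $\{v_1,v_2\}$ of $H$ of size at most two, but (ii) allows $\gamma(H)\geq 3$, in which case no such set exists and the vertices of $H_u$ outside $N[v_1]\cup N[v_2]$ would receive only one neighbour from $S$. The correct witness, as in the paper, is the Type-A set $\{(u,v),(w,v),(z,v)\}$ where $u,w$ are two universal vertices of $G$ and $z$ is any third vertex. The remaining parts of your argument (the Type-B necessity analysis, the Type-A/$K_3$ branch, and the constructions for (i), (iii)--(vi)) are sound and match the paper's reasoning.
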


\begin{proof}
Notice that with the above premises, $G$ does not have isolated vertices. Let $S$ be a $\gamma_{\times 2}(G\circ H)$-set that satisfies Lemma \ref{obs-vertice<=2} and assume that $|S|=3$. By Theorems \ref{teo-char-gamma-t} and \ref{teo-x2=t}  we have that $3=\gamma_{\times 2}(G\circ H)>\gamma_t(G\circ H)=\gamma_t(G)\geq 2$, which implies that $\gamma_t(G)=2$ and so $\gamma(G)\in \{1,2\}$. We differentiate two cases.
 
\vspace{0,3cm}
\noindent Case 1. $\gamma(G)=1$. In this case,  Theorem \ref{teo-char-st=2} leads to $\gamma_{\times 2}(H)\geq 3$.  Now, we consider the following subcases.

\vspace{0,2cm}
\noindent 
Subcase 1.1. $G\cong P_2$.  Notice that Theorem \ref{teo-char-st=2} leads to $\gamma(H)\geq 2$. Suppose that $\gamma(H)\geq 3$ and let $V(G)=\{u,w\}$. Observe that $S\cap V(H_u)\ne \emptyset$ and  $S\cap V(H_w)\ne \emptyset$. Without loss of generality, let $S\cap V(H_u)=\{(u,v_1),(u,v_2)\}$ and $|S\cap V(H_w)|=1$. 
 Since $\gamma(H)\geq 3$, we have that $\{v_1,v_2\}$ is not a dominating set of $H$, which implies that no vertex in $\{u\}\times (V(H)\setminus (N(v_1)\cup N(v_2))$ has two neigbours in $S$, which is a contradiction. Hence $\gamma(H)=2$. Therefore, (i) follows. 
 
\vspace{0,2cm}
\noindent 
Subcase 1.2. $G\not\cong P_2$ has at least two universal vertices. In this case, $\gamma_{\times 2}(G)=2 $ and by Theorem \ref{teo-char-st=2} we deduce that $\gamma(H)\geq 2$. Thus,  (ii) follows.  
 
 \vspace{0,2cm}
\noindent 
Subcase 1.3. $G$ has exactly one universal vertex.  If $\gamma(H)\leq 2$, then by Theorem \ref{teo-char-st=2} we deduce that either  $\gamma(H)=2$ or $H$ has exactly one universal vertex, so that (iii) follows.
Assume that $\gamma(H)\geq 3$. Recall that $|S\cap V(H_x)|\le 2$ for every $x\in V(G)$. Now, if there exist two vertices $u,w\in V(G)$ and two vertices $v_1,v_2\in V(H)$ such that $S\cap V(H_u)=\{(u,v_1),(u,v_2)\}$ and $|S\cap V(H_w)|=1$, then we deduce that  no vertex in $\{u\}\times (V(H)\setminus (N(v_1)\cup N(v_2))$ has two neighbours in $S$, which is a contradiction. Therefore, $\mathcal{A}_S=\emptyset$ and $\mathcal{B}_S$ has to be a $\gamma_{2,t}(G)$-set, as every vertex $x\in V(G)$ satisfies 
$|N(x)\cap \mathcal{B}_S|\geq 2$. Therefore, (iv) follows.

\vspace{0,2cm}
\noindent 
Case 2. $\gamma(G)=2$. In this case,  Theorem \ref{teo-x2=2} leads to $\gamma_{\times 2}(G)\geq 3$. 
If there exist two vertices $u,w\in V(G)$ such that $\mathcal{A}_S=\{u\}$ and $\mathcal{B}_S=\{w\}$, then $\{u,w\}$ is a $\gamma_t(G)$-set, and so for any $x\in N(w)\setminus N[u]$ we have that no vertex in $V(H_x)$ has two neighbours in $S$, which is a contradiction. Therefore, $\mathcal{A}_S=\emptyset$ and $|\mathcal{B}_S|=3$, which implies that $\mathcal{B}_S$ is a $\gamma_{\times 2}(G)$-set. Notice that either $\langle\mathcal{B}_S\rangle\cong C_3$ or $\langle\mathcal{B}_S\rangle\cong P_3$. In the first case, $\mathcal{B}_S$ is a $\gamma_{2,t}(G)$-set and (v) follows. Now, assume that $\langle\mathcal{B}_S\rangle\cong P_3$. If $\gamma(H)\ge 2$, then for any vertex $x$ of degree one in $\langle\mathcal{B}_S\rangle$ we have that $V(H_x)$ have vertices  which do  not have two neighbours in $S$, which is a contradiction. Therefore, $\gamma(H)=1$ and if $\gamma_{\times 2}(G)=\gamma_{2,t}(G)$, then $G$ satisfies (v), otherwise $G$ satisfies (vi), by Theorem \ref{teo-bounds-x2-2,t}.

Conversely, notice that if $G$ and $H$ satisfy one of the six conditions above, then Theorem~\ref{teo-char-st=2} leads to $\gamma_{\times 2}(G\circ H)\geq 3$. To conclude that $\gamma_{\times 2}(G\circ H)=3$, we proceed to show how to define  a double dominating set $D$ of  $G\circ H$ of cardinality three for each of the six conditions. 
 \begin{enumerate}[{\rm (i)}]
\item Let $\{v_1,v_2\}$ be a $\gamma(H)$-set and $V(G)=\{u,w\}$. In this case, $D=\{(u,v_1),(u,v_2),(w,v_1)\}$. 
\item Let $u,w\in V(G)$ be two universal vertices, $z\in V(G)\setminus \{u,w\}$ and $v\in V(H)$. In this case, $D=\{(u,v),(w,v),(z,v)\}$.  
\item Let $u$ be a universal vertex of $G$ and $w\in V(G)\setminus \{u\}$. If $\{v_1,v_2\}$ is a $\gamma(H)$-set or $v_1$ is a universal vertex of $H$ and $v_2\in V(H)\setminus \{v_1\}$, then we set $D=\{(u,v_1),(u,v_2),(w,v_1)\}$.
\item Let $X$ be a $\gamma_{2,t}(G)$-set and $v\in V(H)$. In this case, $D=X\times \{v\}$.

\item Let $X$ be a $\gamma_{2,t}(G)$-set and $v\in V(H)$. In this case, $D=X\times \{v\}$.
\item Let $X$ be a $\gamma_{\times 2}(G)$-set and $v$  a universal vertex of $H$. In this case, $D=X\times \{v\}$.
\end{enumerate}

It is readily seen that in all cases $D$ is a  double dominating set of $G\circ H$. Therefore, $\gamma_{\times 2}(G\circ H)=3$.
\end{proof}

The following result, which is a direct consequence of Theorems \ref{teo-bounds-x2}, \ref{teo-char-st=2} and \ref{teo-char-st=3}, shows the cases when $G$ is isomorphic to a complete graph or  a star graph.

\begin{proposition}\label{prop-Kn-K1n}
Let $H$ be a nontrivial graph. For any integer $n\geq 3$, the following statements hold.

\begin{enumerate}[{\rm (i)}]
\item $\displaystyle
\gamma_{\times 2}(K_n\circ H)=\left\{ \begin{array}{ll}
             2 & if \,\, \gamma(H)=1,\\[5pt]
             3 &  otherwise. 
                                  \end{array}\right.
$
\item $
\displaystyle\gamma_{\times 2}(K_{1,n-1}\circ H)=\left\{ \begin{array}{ll}
             2 & if \, \,  \gamma_{\times 2}(H)=2,\\[5pt]
             3 & if \, \,  \gamma_{\times 2}(H)\geq 3 \,\, and \,\, \gamma(H)\leq 2,\\[5pt]
             4 &  otherwise. 
                                  \end{array}\right.
$
\end{enumerate}
\end{proposition}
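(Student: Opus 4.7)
The plan is to reduce each part of Proposition~\ref{prop-Kn-K1n} to a mechanical application of the characterizations in Theorems~\ref{teo-char-st=2} and~\ref{teo-char-st=3}, together with the upper bound $\gamma_{\times 2}(G\circ H)\le 2\gamma_t(G)$ from Theorem~\ref{teo-bounds-x2}. First I would record the invariants of the two factor graphs for $n\ge 3$: the complete graph $K_n$ has $n\ge 3$ universal vertices, so $\gamma(K_n)=1$, $\gamma_t(K_n)=2$ and $\gamma_{\times 2}(K_n)=2$; the star $K_{1,n-1}$ has exactly one universal vertex (its center), $\gamma(K_{1,n-1})=1$, $\gamma_t(K_{1,n-1})=2$, and, since every leaf has a unique neighbour that must accompany the leaf in any double dominating set, $\gamma_{\times 2}(K_{1,n-1})=n\ge 3$.

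For part (i), if $\gamma(H)=1$ then $\gamma(K_n)=\gamma(H)=1$ and $\gamma_{\times 2}(K_n)=2$, so Theorem~\ref{teo-char-st=2} gives $\gamma_{\times 2}(K_n\circ H)=2$. If instead $\gamma(H)\ge 2$, Theorem~\ref{teo-char-st=2} forces $\gamma_{\times 2}(K_n\circ H)\ge 3$, while condition (ii) of Theorem~\ref{teo-char-st=3} (satisfied since $K_n\not\cong P_2$ has two universal vertices and $\gamma(H)\ge 2$) yields $\gamma_{\times 2}(K_n\circ H)=3$.

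For part (ii), the value $2$ occurs exactly when $\gamma_{\times 2}(H)=2$: since $K_{1,n-1}$ has only one universal vertex we have $\gamma_{\times 2}(K_{1,n-1})=n\ge 3$, so by Theorem~\ref{teo-char-st=2} equality to $2$ forces $\gamma_{\times 2}(H)=2$, which in turn gives $\gamma(H)=1$ and hence the hypothesis of that theorem. When $\gamma_{\times 2}(H)\ge 3$ and $\gamma(H)\le 2$, the lower bound $\gamma_{\times 2}(K_{1,n-1}\circ H)\ge 3$ follows from Theorem~\ref{teo-char-st=2}, and the upper bound follows from condition (iii) of Theorem~\ref{teo-char-st=3}: either $\gamma(H)=2$ directly fits, or $\gamma(H)=1$ together with $\gamma_{\times 2}(H)\ge 3$ forces $H$ to have exactly one universal vertex (by Theorem~\ref{teo-x2=2}), which again fits condition (iii).

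The remaining case, $\gamma(H)\ge 3$ (which absorbs $\gamma_{\times 2}(H)\ge 3$ automatically), is where the main bookkeeping occurs. The upper bound $\gamma_{\times 2}(K_{1,n-1}\circ H)\le 4$ is immediate from Theorem~\ref{teo-bounds-x2} since $\gamma_t(K_{1,n-1})=2$. The task is to rule out value $3$ by inspecting each of the six sufficient conditions of Theorem~\ref{teo-char-st=3}: conditions (i), (ii), (v), (vi) fail because $K_{1,n-1}\not\cong P_2$ has only one universal vertex and $\gamma(K_{1,n-1})=1\neq 2$; condition (iii) fails because $\gamma(H)\ge 3$ rules out both $\gamma(H)=2$ and the existence of a universal vertex in $H$. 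The delicate point, and the only step that is not purely symbol-pushing, is ruling out condition (iv): it requires $\gamma_{2,t}(K_{1,n-1})=3$, but the leaves of $K_{1,n-1}$ have degree $1$, so no vertex outside the center can have two neighbours in any subset, whence $K_{1,n-1}$ admits \emph{no} double total dominating set at all. Hence $\gamma_{\times 2}(K_{1,n-1}\circ H)\ge 4$, completing the proof.
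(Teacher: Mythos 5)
Your proposal is correct and follows exactly the route the paper intends: the paper states Proposition \ref{prop-Kn-K1n} as a direct consequence of Theorems \ref{teo-bounds-x2}, \ref{teo-char-st=2} and \ref{teo-char-st=3}, and you have simply filled in that case analysis, including the one genuinely non-routine point (ruling out condition (iv) of Theorem \ref{teo-char-st=3} because $K_{1,n-1}$ admits no double total dominating set). No gaps.
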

  
We now consider the cases in which $G$ is a double star graph or a complete bipartite graph. The following result is a direct consequence of Theorems \ref{teo-bounds-x2},  \ref{teo-char-st=2} and \ref{teo-char-st=3}.   
  
\begin{proposition}\label{prop-Snm-Knm}
Let $H$ be a graph. For any integers $n_2\geq n_1\geq 2$, the following statements hold.

\begin{enumerate}[{\rm (i)}]
\item $\gamma_{\times 2}(S_{n_1,n_2}\circ H)=4.$

\item $
\displaystyle\gamma_{\times 2}(K_{n_1,n_2}\circ H)=\left\{ \begin{array}{ll}
             3 & if \, n_1=2 \,\, and \,\, \gamma(H)=1;\\
             4 & otherwise.
                                  \end{array}\right.
$
\end{enumerate}
\end{proposition}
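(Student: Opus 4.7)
The strategy is to apply the three cited theorems mechanically. Theorem \ref{teo-bounds-x2} provides the universal upper bound $\gamma_{\times 2}(G\circ H)\le 2\gamma_t(G)=4$, since in both cases a total dominating set of size $2$ is readily exhibited (the two adjacent centres of the double star; one vertex from each part of the complete bipartite graph). Theorem \ref{teo-char-st=2} rules out $\gamma_{\times 2}(G\circ H)=2$ whenever $G$ has no universal vertex, which is the case here because $\gamma(S_{n_1,n_2})=\gamma(K_{n_1,n_2})=2$. What remains is to decide, via the six-way characterisation of Theorem \ref{teo-char-st=3}, for which pairs $(G,H)$ the value $3$ is attained, and to conclude that it equals $4$ otherwise.

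For part (i), if $G=S_{n_1,n_2}$ with $n_1,n_2\ge 2$ then $G$ has no universal vertex and is not $P_2$, so conditions (i)--(iv) of Theorem \ref{teo-char-st=3} are immediately excluded. Both (v) and (vi) can then be ruled out by a single observation about pendant vertices: every leaf of $S_{n_1,n_2}$ has degree $1$, which prevents it from being adjacent to two vertices of any set (so $S_{n_1,n_2}$ admits no double total dominating set, excluding (v)), and it forces every leaf to belong to any double dominating set (so $\gamma_{\times 2}(S_{n_1,n_2})\ge n_1+n_2\ge 4$, excluding (vi)). Combined with the upper bound $4$ this yields $\gamma_{\times 2}(S_{n_1,n_2}\circ H)=4$.

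For part (ii), $K_{n_1,n_2}$ again has no universal vertex and is not $P_2$, so only conditions (v) and (vi) of Theorem \ref{teo-char-st=3} could produce the value $3$. A direct argument on the bipartition shows $\gamma_{2,t}(K_{n_1,n_2})=4$ (each vertex on one side requires two neighbours in the set from the opposite side, and two vertices from each side obviously suffice), which excludes (v). For (vi), the computation
$$\gamma_{\times 2}(K_{n_1,n_2})=\begin{cases}3,&\text{if }n_1=2,\\ 4,&\text{if }n_1\ge 3,\end{cases}$$
shows that (vi) is compatible with $K_{n_1,n_2}$ precisely when $n_1=2$: the upper bound for $n_1=2$ is witnessed by $\{a_1,a_2,b_1\}$, and the lower bound when $n_1\ge 3$ follows because any double dominating set must contain at least two vertices of each part (otherwise some vertex on the opposite part is dominated at most once). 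Therefore (vi) applies exactly when $n_1=2$ and $\gamma(H)=1$, giving $\gamma_{\times 2}(K_{n_1,n_2}\circ H)=3$ in that subcase and $4$ in every other situation. The main obstacle is the $n_1=2$ versus $n_1\ge 3$ dichotomy for $\gamma_{\times 2}(K_{n_1,n_2})$, which is exactly what separates the two branches of the formula in part (ii); beyond that, the proof is pure case bookkeeping.
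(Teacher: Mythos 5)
Your proof follows exactly the paper's route: the paper states this proposition as a direct consequence of Theorems \ref{teo-bounds-x2}, \ref{teo-char-st=2} and \ref{teo-char-st=3} without further detail, and you have filled in precisely the case analysis implicit there (upper bound $2\gamma_t(G)=4$, exclusion of the value $2$ since $\gamma(G)=2$, and a run through the six conditions of Theorem \ref{teo-char-st=3} to isolate when the value $3$ occurs). The one slip is your parenthetical justification of $\gamma_{\times 2}(K_{n_1,n_2})\ge 4$ for $n_1\ge 3$: it is not true that every double dominating set meets each part in at least two vertices (for instance $A\cup\{b\}$, with $A$ one full part and $b$ a single vertex of the other, is a double dominating set of $K_{3,3}$ meeting $B$ in only one vertex); the correct observation is that if a part contributes at most one vertex to the set, then the entire opposite part must lie in the set, which still forces cardinality at least $n_1+1\ge 4$. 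This is a one-line repair and does not affect the conclusion.
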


\section{All cases where  $G\cong P_n$ or $G\cong C_n$}\label{subsection-exact-value}

\subsection{Cases where $\gamma(H)=1$}  

\begin{proposition}\label{teo-Pn-gamma=1}
Let $n\geq 3$ be an integer and let $H$ be a nontrivial graph. If 
$\gamma(H)=1$, then
$$
\gamma_{\times 2}(P_n\circ H)=\left\{ \begin{array}{ll}
                     2\left\lceil\frac{n}{3}\right\rceil+1, & \text{if } \gamma_{\times 2}(H)\geq 3 \text{ and } n\equiv 0 ( \text{mod } 3)  ,\\[5pt]
                     2\left\lceil\frac{n}{3} \right\rceil, &  \mbox{otherwise.} 
                                  \end{array}\right.
                                  $$
\end{proposition}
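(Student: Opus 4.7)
My plan is to reduce the proof to a case split on the universal-vertex structure of $H$, since $\gamma(H)=1$ already forces $H$ to have at least one universal vertex. By Theorem~\ref{teo-x2=2}, the assumption $\gamma_{\times 2}(H)\ge 3$ is equivalent to $H$ having \emph{exactly} one universal vertex, whereas $\gamma_{\times 2}(H)=2$ corresponds to $H$ having at least two universal vertices. So the dichotomy in the proposition's case split is precisely the dichotomy between one and several universal vertices in $H$, and each case is designed to invoke a different item of Theorem~\ref{teo-bounds-conditions-H}.

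For the case $\gamma_{\times 2}(H)\ge 3$, I would apply Theorem~\ref{teo-bounds-conditions-H}(iii) directly to conclude that
$$\gamma_{\times 2}(P_n\circ H)=\gamma_{t\{R2\}}(P_n),$$
and then read off the explicit value from Remark~\ref{rem-Cn-Pn}(i). This handles both the $n\equiv 0 \pmod 3$ line (giving $2\lceil n/3\rceil+1$) and the complementary congruence classes under the ``otherwise'' umbrella (giving $2\lceil n/3\rceil$). No further work is needed here.

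For the case $\gamma_{\times 2}(H)=2$, which falls entirely under the ``otherwise'' clause, I would sandwich $\gamma_{\times 2}(P_n\circ H)$ between $2\rho(P_n)$ and $2\gamma(P_n)$: the lower bound comes from Theorem~\ref{teo-bounds-x2}, and the upper bound $\gamma_{\times 2}(P_n\circ H)\le 2\gamma(P_n)$ comes from Theorem~\ref{teo-bounds-conditions-H}(ii) applied with $H$ having two universal vertices. Since $\gamma(P_n)=\rho(P_n)=\lceil n/3\rceil$ (standard facts about paths), both bounds coincide at $2\lceil n/3\rceil$, closing the case.

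There is essentially no obstacle here beyond correctly identifying the case split; the whole proof is a recombination of Theorems~\ref{teo-bounds-x2} and~\ref{teo-bounds-conditions-H}(ii)--(iii) together with Remark~\ref{rem-Cn-Pn}(i). The one spot requiring a little care is checking that the ``otherwise'' clause indeed covers every subcase left over after Case~1, namely $\bigl(\gamma_{\times 2}(H)\ge 3$ and $n\not\equiv 0\pmod 3\bigr)$ together with every instance of $\gamma_{\times 2}(H)=2$; both sub-subcases land on $2\lceil n/3\rceil$, so the two branches of the proposition's piecewise formula match the two branches of the proof.
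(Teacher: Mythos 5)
Your proposal is correct and follows essentially the same route as the paper: Theorem~\ref{teo-bounds-conditions-H}(iii) plus Remark~\ref{rem-Cn-Pn}(i) when $\gamma_{\times 2}(H)\ge 3$, and the $2\rho(P_n)\le\gamma_{\times 2}(P_n\circ H)\le 2\gamma(P_n)$ sandwich when $\gamma_{\times 2}(H)=2$. The only cosmetic difference is that the paper invokes Corollary~\ref{cor-closed-formulae-st=2} for the second case, whereas you unpack that corollary into its ingredients (Theorems~\ref{teo-bounds-x2} and~\ref{teo-bounds-conditions-H}(ii) together with $\gamma(P_n)=\rho(P_n)$).
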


\begin{proof}
If $\gamma_{\times 2}(H)=2$, then by Corollary \ref{cor-closed-formulae-st=2} we deduce that $\gamma_{\times 2}(P_n\circ H)=2\gamma(P_n)$. Now, if $\gamma_{\times 2}(H)\geq 3$, then $H$ has exactly one universal vertex and by Theorem \ref{teo-bounds-conditions-H} (iii) we deduce that $\gamma_{\times 2}(G\circ H)=\gamma_{t\{R2\}}(P_n)$.
\end{proof}

From now on we assume that $V(C_n)=\{u_1,\dots ,u_n\}$, where the subscripts are taken modulo $n$ and consecutive vertices are adjacent.

\begin{proposition}\label{teo-Cn-gamma=1}
Let $n\geq 3$ be an integer and let $H$ be a graph. If 
$\gamma(H)=1$, then
$$\gamma_{\times 2}(C_n\circ H)=\left\lceil  \frac{2n}{3}\right\rceil.$$
\end{proposition}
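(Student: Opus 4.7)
The plan is to split the argument into matching upper and lower bounds, the lower bound being valid for any nontrivial $H$, while the upper bound will exploit the hypothesis $\gamma(H)=1$.

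For the upper bound, I would apply Theorem \ref{teo-bounds-conditions-H}(i) directly: since $\gamma(H)=1$, we have $\gamma_{\times 2}(C_n\circ H)\le \gamma_{t\{R2\}}(C_n)$, and Remark \ref{rem-Cn-Pn}(ii) gives $\gamma_{t\{R2\}}(C_n)=\lceil 2n/3\rceil$. This half is essentially a one-liner.

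The heart of the argument is the lower bound $\gamma_{\times 2}(C_n\circ H)\ge \lceil 2n/3\rceil$. I would pick a $\gamma_{\times 2}(C_n\circ H)$-set $S$ satisfying Lemma \ref{obs-vertice<=2}, and define the projection $f\colon V(C_n)\to\{0,1,2\}$ by $f(u)=|S\cap V(H_u)|$. The key local inequality to prove is
$$f(u_{i-1})+f(u_i)+f(u_{i+1})\ge 2 \quad\text{for every } i\in\{1,\dots,n\},$$
with subscripts modulo $n$. Summing this over all $i$ counts every $f(u_j)$ three times and yields $3|S|\ge 2n$, hence $|S|\ge\lceil 2n/3\rceil$.

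The main work therefore reduces to a short case analysis on $f(u_i)$. If $f(u_i)=0$, then every $(u_i,v)$ lies outside $S$, so its two required dominators must come from $V(H_{u_{i-1}})\cup V(H_{u_{i+1}})$, giving $f(u_{i-1})+f(u_{i+1})\ge 2$. If $f(u_i)=1$, say $S\cap V(H_{u_i})=\{(u_i,v_0)\}$, then the vertex $(u_i,v_0)\in S$ must itself be dominated twice; since no other element of $S$ sits in $V(H_{u_i})$, we obtain $f(u_{i-1})+f(u_{i+1})\ge 1$, and adding the contribution of $u_i$ yields the required $\ge 2$. If $f(u_i)=2$, the inequality is trivial. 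I expect the one mildly delicate step to be the $f(u_i)=1$ case, where the point is that a single vertex in $V(H_u)$ cannot double-dominate itself from within the fibre, forcing weight on the neighbouring fibres; once that is noted, the rest is routine and the two matching bounds yield $\gamma_{\times 2}(C_n\circ H)=\lceil 2n/3\rceil$.
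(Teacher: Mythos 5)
Your proof is correct and follows essentially the same route as the paper: the identical upper bound via Theorem \ref{teo-bounds-conditions-H}(i) combined with Remark \ref{rem-Cn-Pn}(ii), and the same lower bound obtained by summing the window inequality $\left|S\cap\left(V(H_{u_{i-1}})\cup V(H_{u_i})\cup V(H_{u_{i+1}})\right)\right|\ge 2$ over all $i$ (the paper justifies that inequality in one line, since the closed neighbourhood of any vertex of the middle fibre lies inside the three fibres, whereas you derive it by a case analysis on $|S\cap V(H_{u_i})|$). The only omission is the trivial case $H\cong K_1$, which the hypothesis $\gamma(H)=1$ permits but Theorem \ref{teo-bounds-conditions-H}(i) does not cover; the paper disposes of it separately by noting $C_n\circ K_1\cong C_n$ and invoking Remark \ref{rem-Cn-Pn}(ii).
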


\begin{proof}
If $H$ is a trivial graph, then we are done, by Remark \ref{rem-Cn-Pn}. From now on we assume that $H$ has at least two vertices.
If $\gamma(H)=1$, then by combining Theorem \ref{teo-bounds-conditions-H} (i)  and Remark \ref{rem-Cn-Pn} (ii), we deduce that $\gamma_{\times 2}(C_n\circ H)\le \left\lceil\frac{2n}{3}\right\rceil$.

Now, let $S$ be a $\gamma_{\times 2}(C_n\circ H)$-set.
Notice that for any $i\in \{1,\dots , n\}$ we have that $$\left|S\cap \left(\bigcup_{j=0}^2 V(H_{u_{i+j}})\right)\right|\ge 2.$$
Hence,
$$3\gamma_{\times 2}(C_n\circ H)=3|S|   =\sum_{i=1}^n  \left|S\cap\left(\bigcup_{j=0}^2 V(H_{u_{i+j}})\right)\right|\ge 2n.$$
Therefore, $\gamma_{\times 2}(C_n\circ H)\ge \left\lceil\frac{2n}{3}\right\rceil$, and the result follows.
\end{proof}

\subsection{Cases where $\gamma(H)=2$} 

To begin this subsection we need to state the following four lemmas.

\begin{lemma}\label{lemma-conditions-proj}
Let $G$ be a nontrivial connected graph and let $H$ be a graph. The following statements hold for every $\gamma_{\times 2}(G\circ H)$-set $S$ that satisfies Lemma \ref{obs-vertice<=2}.

\begin{enumerate}[{\rm (i)}]
\item If $\gamma(H)\geq 2$ and $x\in \mathcal{B}_S\cup \mathcal{C}_S$, then $\displaystyle\sum_{u\in N(x)}|S\cap V(H_{u})|\geq 2$.
\item If $\gamma(H)=2$ and $x\in \mathcal{A}_S$, then $\displaystyle\sum_{u\in N(x)}|S\cap V(H_{u})|\geq 1$.
\item If $\gamma(H)\geq 3$ and $x\in V(G)$, then $\displaystyle\sum_{u\in N(x)}|S\cap V(H_{u})|\geq 2$.
\end{enumerate}  
\end{lemma}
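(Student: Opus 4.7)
The plan is to handle the three parts uniformly by a \emph{witness-vertex} strategy: in each case I would pick a specific vertex of $V(H_x)$ whose double-domination requirement in $G\circ H$, combined with the structure of $S\cap V(H_x)$ dictated by Lemma \ref{obs-vertice<=2}, forces the claimed lower bound on $\sum_{u\in N(x)}|S\cap V(H_u)|$. The key observation is that for any $(x,w)\in V(H_x)$, the $S$-dominators of $(x,w)$ decompose as those inside $V(H_x)$ (controlled by $N_H(w)$ and $S\cap V(H_x)$) and those in the neighbour-copies $H_u$, $u\in N(x)$, whose total size is exactly $\sum_{u\in N(x)}|S\cap V(H_u)|$.

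For part (i), if $x\in\mathcal{C}_S$, I would take any $(x,v)\in V(H_x)$ as witness: it has no $S$-neighbour inside $V(H_x)$, so both of its required dominators must come from neighbour-copies. If $x\in\mathcal{B}_S$ with $S\cap V(H_x)=\{(x,v^*)\}$, the hypothesis $\gamma(H)\geq 2$ guarantees some $v'\in V(H)\setminus N_H[v^*]$; then $(x,v')\notin S$ again has no $S$-neighbour in $V(H_x)$, giving the same conclusion. Part (iii) follows in the same spirit: the cases $x\in\mathcal{B}_S\cup\mathcal{C}_S$ reduce to (i), while for $x\in\mathcal{A}_S$ with $S\cap V(H_x)=\{(x,v_1),(x,v_2)\}$ the hypothesis $\gamma(H)\geq 3$ makes $\{v_1,v_2\}$ fail to dominate $H$, so there is some $w\in V(H)\setminus(N_H[v_1]\cup N_H[v_2])$, and $(x,w)$ serves as the required witness.

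The main obstacle will be part (ii), because $\gamma(H)=2$ need not produce a vertex of $H$ outside $N_H[v_1]\cup N_H[v_2]$, so the direct witness argument collapses. My plan is to argue by contradiction: assume $\sum_{u\in N(x)}|S\cap V(H_u)|=0$, where $S\cap V(H_x)=\{(x,v_1),(x,v_2)\}$. Then double-domination of any $(x,w)\in V(H_x)\setminus S$ forces $w\in N_H(v_1)\cap N_H(v_2)$, and double-domination of the $S$-vertex $(x,v_1)$ forces its only available second $S$-dominator $(x,v_2)$ to be a neighbour, i.e., $v_1v_2\in E(H)$. Combining these, $v_1$ turns out to dominate every vertex of $H$, contradicting $\gamma(H)=2$. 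This enforced-universal-vertex contradiction is the technical heart of the lemma and the only place where the connectedness/structure of $H$ is used in a non-trivial way.
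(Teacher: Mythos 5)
Your proof is correct and rests on the same basic idea as the paper's: locate a vertex of the copy $H_x$ whose double-domination requirement cannot be met inside $V(H_x)$, so that the missing dominators must be supplied by the copies $H_u$ with $u\in N(x)$, whose total contribution is exactly $\sum_{u\in N(x)}|S\cap V(H_u)|$. Your parts (i) and (iii) are exactly the paper's argument, merely spelled out (the paper compresses them into one sentence each). Where you genuinely diverge is part (ii). The paper simply asserts that if $x\in\mathcal{A}_S$ and $\sum_{u\in N(x)}|S\cap V(H_u)|=0$, then some vertex of $V(H_x)\setminus S$ lacks two neighbours in $S$. That assertion is not literally true in every configuration: with $S\cap V(H_x)=\{(x,v_1),(x,v_2)\}$ and, say, $H\cong C_4$ with $v_1,v_2$ the two non-adjacent vertices, every vertex of $V(H_x)\setminus S$ does have two neighbours in $S$, and the failure of double domination occurs instead at $(x,v_1)\in S$ itself, which is then dominated only once (by itself). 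Your argument for (ii) --- first forcing $v_1v_2\in E(H)$ from the requirement $|N[(x,v_1)]\cap S|\ge 2$, then concluding that $v_1$ would be a universal vertex of $H$, contradicting $\gamma(H)=2$ --- covers precisely this overlooked case, so your treatment of (ii) is actually more careful and complete than the published one. The only cost is a slightly longer case analysis; nothing is lost in generality.
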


\begin{proof}
First, we suppose that $\gamma(H)=2$. If there exists either a vertex $x\in \mathcal{B}_S\cup \mathcal{C}_S$ such that $\sum_{u\in N(x)}|S\cap V(H_{u})|\leq 1$ or a vertex $x\in \mathcal{A}_S$ such that $\sum_{u\in N(x)}|S\cap V(H_{u})|=0$, then there exists a vertex in $ V(H_x)\setminus S$ which does not have two neighbours in $S$. Therefore,  (ii) follows, and (i) follows for $\gamma(H)=2$. Now, let $x\in V(G)$. Since $S$ satisfies Lemma~\ref{obs-vertice<=2}, if $\gamma(H)\geq 3$, then there exists a vertex in $ V(H_x)\setminus S$ which does not have  neighbours in $S\cap V(H_x)$, which implies that  $\sum_{u\in N(x)}|S\cap V(H_{u})|\geq 2$ and so (i) and (iii) follows. Therefore, the proof is complete.
\end{proof}

 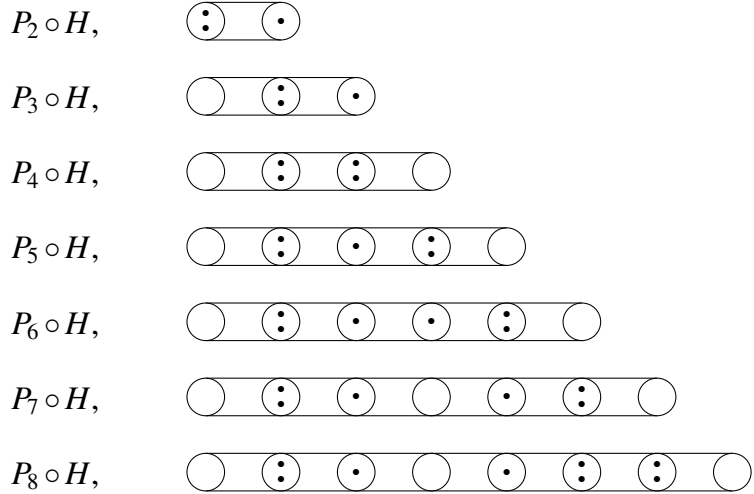
\begin{figure}[ht]
\centering
\begin{tikzpicture}[scale=.5, transform shape]

\node at (-4,1.9) {\huge $P_2\circ H$,};
\draw (0,2) circle (0.5);
\draw (2,2) circle (0.5);

\draw(0,2.5)--(2,2.5);
\draw(0,1.5)--(2,1.5);

\draw (0,2.2) node {$\bullet$};
\draw (0,1.8) node {$\bullet$};
\draw (2,2) node {$\bullet$};
\node at (-4,-0.1) {\Huge $P_3\circ H$,};
\draw (0,0) circle (0.5);
\draw (2,0) circle (0.5);
\draw (4,0) circle (0.5);

\draw(0,0.5)--(4,0.5);
\draw(0,-0.5)--(4,-0.5);

\draw (2,0.2) node {$\bullet$};
\draw (2,-0.2) node {$\bullet$};
\draw (4,0) node {$\bullet$};


\node at (-4,-2.1) {\Huge $P_4\circ H$,};
\draw (0,-2) circle (0.5);
\draw (2,-2) circle (0.5);
\draw (4,-2) circle (0.5);
\draw (6,-2) circle (0.5);

\draw(0,-1.5)--(6,-1.5);
\draw(0,-2.5)--(6,-2.5);

\draw (2,-1.8) node {$\bullet$};
\draw (2,-2.2) node {$\bullet$};

\draw (4,-1.8) node {$\bullet$};
\draw (4,-2.2) node {$\bullet$};


\node at (-4,-4.1) {\Huge $P_5\circ H$,};
\draw (0,-4) circle (0.5);
\draw (2,-4) circle (0.5);
\draw (4,-4) circle (0.5);
\draw (6,-4) circle (0.5);
\draw (8,-4) circle (0.5);

\draw(0,-3.5)--(8,-3.5);
\draw(0,-4.5)--(8,-4.5);

\draw (2,-3.8) node {$\bullet$};
\draw (2,-4.2) node {$\bullet$};

\draw (4,-4) node {$\bullet$};

\draw (6,-3.8) node {$\bullet$};
\draw (6,-4.2) node {$\bullet$};


\node at (-4,-6.1) {\Huge $P_6\circ H$,};
\draw (0,-6) circle (0.5);
\draw (2,-6) circle (0.5);
\draw (4,-6) circle (0.5);
\draw (6,-6) circle (0.5);
\draw (8,-6) circle (0.5);
\draw (10,-6) circle (0.5);

\draw(0,-5.5)--(10,-5.5);
\draw(0,-6.5)--(10,-6.5);

\draw (2,-5.8) node {$\bullet$};
\draw (2,-6.2) node {$\bullet$};

\draw (4,-6) node {$\bullet$};
\draw (6,-6) node {$\bullet$};

\draw (8,-5.8) node {$\bullet$};
\draw (8,-6.2) node {$\bullet$};


\node at (-4,-8.1) {\Huge $P_7\circ H$,};
\draw (0,-8) circle (0.5);
\draw (2,-8) circle (0.5);
\draw (4,-8) circle (0.5);
\draw (6,-8) circle (0.5);
\draw (8,-8) circle (0.5);
\draw (10,-8) circle (0.5);
\draw (12,-8) circle (0.5);

\draw(0,-7.5)--(12,-7.5);
\draw(0,-8.5)--(12,-8.5);

\draw (2,-7.8) node {$\bullet$};
\draw (2,-8.2) node {$\bullet$};

\draw (4,-8) node {$\bullet$};
\draw (8,-8) node {$\bullet$};

\draw (10,-7.8) node {$\bullet$};
\draw (10,-8.2) node {$\bullet$};


\node at (-4,-10.1) {\Huge $P_8\circ H$,};
\draw (0,-10) circle (0.5);
\draw (2,-10) circle (0.5);
\draw (4,-10) circle (0.5);
\draw (6,-10) circle (0.5);
\draw (8,-10) circle (0.5);
\draw (10,-10) circle (0.5);
\draw (12,-10) circle (0.5);
\draw (14,-10) circle (0.5);

\draw(0,-9.5)--(14,-9.5);
\draw(0,-10.5)--(14,-10.5);

\draw (2,-9.8) node {$\bullet$};
\draw (2,-10.2) node {$\bullet$};

\draw (4,-10) node {$\bullet$};
\draw (8,-10) node {$\bullet$};

\draw (10,-9.8) node {$\bullet$};
\draw (10,-10.2) node {$\bullet$};

\draw (12,-9.8) node {$\bullet$};
\draw (12,-10.2) node {$\bullet$};

\end{tikzpicture}
\caption{The scheme used in the proof of Lemma \ref{LemmeGammax2LexicographicPathasxH}.}\label{FigureUpperBoundlexicographicPathsxH}
\end{figure}

\begin{lemma}\label{LemmeGammax2LexicographicPathasxH} For any integer $n\ge 3$ and any graph $H$ with $\gamma(H)=2$,
$$\gamma_{\times 2}(P_n\circ H)\le\left\{ \begin{array}{ll}
n-\left\lfloor \frac{n}{7} \right\rfloor +1 & \text{if} \,\, n\equiv 1,2 \pmod 7,
\\[5pt]
n-\left\lfloor \frac{n}{7} \right\rfloor & otherwise.
\end{array}\right. $$ 
\end{lemma}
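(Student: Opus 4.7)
The plan is to exhibit an explicit double dominating set $D$ of $P_n\circ H$ of the stated cardinality, following the pattern of Figure~\ref{FigureUpperBoundlexicographicPathsxH}. Fix a $\gamma(H)$-set $\{v_1,v_2\}$, and for each $i\in\{1,\dots,n\}$ abbreviate $A_i=\{(u_i,v_1),(u_i,v_2)\}$ and $B_i=\{(u_i,v_1)\}$. Writing $n=7q+r$ with $r\in\{0,1,\dots,6\}$, I would take the ``body''
\[
D_{\mathrm{body}}\;=\;\bigcup_{k=0}^{q-1}\bigl(A_{7k+2}\cup B_{7k+3}\cup B_{7k+5}\cup A_{7k+6}\bigr),
\]
a set of $6q$ vertices, and append a ``tail'' $D_{\mathrm{tail}}$ depending on $r$: namely $\emptyset$ if $r=0$; $A_{7q}$ if $r=1$; $B_{7q}\cup A_{7q+1}$ if $r=2$; $A_{7q+2}\cup B_{7q+3}$ if $r=3$; $A_{7q+2}\cup A_{7q+3}$ if $r=4$; $A_{7q+2}\cup B_{7q+3}\cup A_{7q+4}$ if $r=5$; and $A_{7q+2}\cup B_{7q+3}\cup B_{7q+4}\cup A_{7q+5}$ if $r=6$. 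A direct count of $|D_{\mathrm{body}}\cup D_{\mathrm{tail}}|$ yields $n-\lfloor n/7\rfloor$ for $r\in\{0,3,4,5,6\}$ and $n-\lfloor n/7\rfloor+1$ for $r\in\{1,2\}$, exactly matching the stated bound.

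It then remains to verify that $D:=D_{\mathrm{body}}\cup D_{\mathrm{tail}}$ double-dominates $P_n\circ H$, i.e., that $|N[(u_i,y)]\cap D|\geq 2$ for every $(u_i,y)\in V(P_n\circ H)$. I would proceed copy-by-copy, leaning on two facts used repeatedly: (a)~since $\{v_1,v_2\}$ dominates $H$, whenever $A_i\subseteq D$ every vertex $(u_i,y)$ of $H_{u_i}$ has an in-copy $D$-neighbour coming from $A_i$; and (b)~the body/tail pattern is arranged so that for every $i$,
\[
\sum_{u_j\in N_{P_n}(u_i)}|D\cap V(H_{u_j})|\;\geq\;2,
\]
except at those positions where $A_i\subseteq D$, where the sum is at least $1$ and the remaining unit is supplied by~(a). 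For a vertex $(u_i,y)\in D$ the external sum above already contributes at least $1$ to $|N((u_i,y))\cap D|$, which together with $(u_i,y)$ itself gives $|N[(u_i,y)]\cap D|\geq 2$.

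The main obstacle is the boundary bookkeeping: one must track the quantity $\sum_{u_j\in N(u_i)}|D\cap V(H_{u_j})|$ at every position $u_i$, paying particular attention to the endpoints $u_1$ and $u_n$, the junctions between consecutive $7$-blocks, and the junction between the last block and the tail. For instance, for $r\in\{3,4,5,6\}$ both $u_{7q}$ and $u_{7q+1}$ lie entirely outside $D$, and their double-domination relies on $A_{7q-1}\subseteq D_{\mathrm{body}}$ and $A_{7q+2}\subseteq D_{\mathrm{tail}}$ respectively; analogous junctions must be handled for each of the other residues. Once these checks are organized into a short table indexed by position relative to the block/tail structure, the verification collapses into a list of routine inequalities obtained via (a) and (b), completing the proof.
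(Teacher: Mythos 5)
Your construction is correct and coincides with the paper's: the paper exhibits the same period-$7$ pattern $(\emptyset,A,B,\emptyset,B,A,\emptyset)$ via the schemes in Figure~\ref{FigureUpperBoundlexicographicPathsxH}, concatenating $q$ such blocks with a residual $P_r$-scheme tail, and your special handling of $r=1$ (inserting $A_{7q}$ into the otherwise empty seventh slot) reproduces exactly the paper's $P_8$ end-scheme. The verification you outline, including the junction checks, matches the level of detail the paper itself provides.
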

 
 \begin{proof}
In Figure \ref{FigureUpperBoundlexicographicPathsxH} we show how to construct a double dominating set $S$ of $P_n\circ H$ for $n\in \{2,\dots, 8\}.$ In this scheme, the  circles represent the copies of $H$ in $P_n\circ H$, two dots in a circle represent two vertices belonging to $S$, which form a dominating set of the corresponding copy of $H$, while a single dot in a circle  represents one vertex belonging to $S$. 

We now proceed to describe the construction of $S$ for any $n=7q+r$, where $q\ge 1$ and $0\le r \le 6$. We partition $V(P_n)=\{u_1, \dots, u_n\}$ into $q$ sets of cardinality $7$ and for $r\ge 1$  one additional set of cardinality $r$, in such a way that the subgraph induced by all these sets are paths.
For any $r\ne 1$, the restriction of $S$ to each of these $q$ paths of length $7$ corresponds to the scheme associated with $P_7\circ H$ in Figure \ref{FigureUpperBoundlexicographicPathsxH}, while for the path of length $r$ (if any) we take the scheme associated with $P_r\circ H$. The case $r=1$ and $q\ge 2$ is slightly different, as for the first $q-1$ paths of length $7$ we take the scheme associated with $P_7\circ H$ and for the path associated with the last  $8$ vertices of $P_n$ we take the scheme associated     with $P_8\circ H$.

Notice that, for $n\equiv 1,2\pmod 7$, we have that $\gamma_{\times 2}(P_n\circ H)\le |S|=6q+r+1=n-\left\lfloor \frac{n}{7} \right\rfloor +1$, while for $n\not \equiv 1,2\pmod 7$ we have   $\gamma_{\times 2}(P_n\circ H)\le |S|=6q+r=n-\left\lfloor \frac{n}{7} \right\rfloor$. Therefore, the result follows.
 \end{proof}

\begin{lemma}\label{lema-1}
Let $P_7=w_1,\ldots ,w_7$ be  a subgraph of $C_n$. Let $H$ be a graph such that $\gamma(H)=2$ and $W=\{w_1,\ldots ,w_7\}\times V(H)$. If  $S$ is a double dominating set  of $C_n\circ H$ which satisfies Lemma \ref{obs-vertice<=2}, then $$|S\cap W|\geq 6.$$
\end{lemma}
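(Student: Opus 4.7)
Set $a_i := |S\cap V(H_{w_i})|$ for $i\in\{1,\dots,7\}$; by hypothesis each $a_i\in\{0,1,2\}$, and the task is to show $a_1+\dots+a_7\ge 6$. The key tool is Lemma~\ref{lemma-conditions-proj}: for each interior vertex $w_i$ of the path ($i\in\{2,3,4,5,6\}$), both $C_n$-neighbours $w_{i-1}$ and $w_{i+1}$ lie in $W$, so applying the lemma at $w_i$ yields
\[
a_{i-1}+a_{i+1}\;\ge\;
\begin{cases}
2 & \text{if } a_i\le 1,\\
1 & \text{if } a_i=2,
\end{cases}
\qquad i\in\{2,3,4,5,6\}.
\]
In particular, $a_{i-1}+a_i+a_{i+1}\ge 2$ for each such $i$.

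I will carry out a case analysis on $a_4$. If $a_4=2$, apply the displayed inequality at $i=2$ (checking both sub-cases $a_2\le 1$ and $a_2=2$) to obtain $a_1+a_2+a_3\ge 2$, and symmetrically at $i=6$ to obtain $a_5+a_6+a_7\ge 2$; adding these to $a_4=2$ immediately delivers $\sum a_i\ge 6$. If instead $a_4\le 1$, the $i=4$ inequality gives $a_3+a_5\ge 2$, and I propose to walk through the possibilities $(a_3,a_5)\in\{(0,2),(2,0),(1,1),(1,2),(2,1),(2,2)\}$. For each of the first five, the constraint at $i=3$ (respectively $i=5$) forces $a_2\ge 1$ (respectively $a_6\ge 1$), and the subsequent constraint at $i=2$ (respectively $i=6$) then propagates to $a_1$ or $a_7$; bookkeeping in each sub-case yields $\sum a_i\ge 6$ (often $\ge 7$).

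The delicate sub-case is $(a_3,a_4,a_5)=(2,1,2)$. Here the $i=3$ and $i=5$ constraints collapse to $a_2+a_4\ge 1$ and $a_4+a_6\ge 1$, both already satisfied by $a_4=1$ alone, so the Lemma~\ref{lemma-conditions-proj} bookkeeping permits $a_1=a_2=a_6=a_7=0$ and $\sum a_i=5$. To rule this out I plan to return to the double domination condition itself. Writing $S\cap V(H_{w_3})=\{(w_3,v_1),(w_3,v_2)\}$, the hypotheses $a_2=0$ and $a_4=1$ mean that every $(w_3,v)\notin S$ with $v\notin\{v_1,v_2\}$ picks up its single extra $S$-neighbour only from within $V(H_{w_3})$, forcing $|\{v_1,v_2\}\cap N_H(v)|\ge 1$ and hence $\{v_1,v_2\}$ to be a $\gamma(H)$-set; the analogous statement holds for $S\cap V(H_{w_5})$. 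Since $\gamma(H)=2$ rules out universal vertices in $H$, I would use the interplay between these two $\gamma(H)$-sets, the lone vertex of $S\cap V(H_{w_4})$, and the empty surrounding fibres to extract the extra unit of weight required.

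The main obstacle is precisely this last sub-case: the arithmetic provided by Lemma~\ref{lemma-conditions-proj} alone admits the weight-$5$ configuration $(0,0,2,1,2,0,0)$, so closing the argument demands genuine structural information about dominating pairs in $H$ rather than further counting.
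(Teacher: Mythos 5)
Your reduction to the integer profile $a_1,\dots,a_7$ and your case analysis are correct, and you have put your finger on exactly the right spot: the profile $(a_1,\dots,a_7)=(0,0,2,1,2,0,0)$, of total weight $5$, satisfies every inequality supplied by Lemma~\ref{lemma-conditions-proj}. But your plan to close this last sub-case by a structural argument about dominating pairs of $H$ cannot succeed, because the configuration is genuinely realizable. Take $n=9$, $H=P_4$ with vertex sequence $h_1h_2h_3h_4$, and
\[
S=\{(u_3,h_1),(u_3,h_3),(u_4,h_1),(u_5,h_1),(u_5,h_3),(u_8,h_1),(u_8,h_2),(u_9,h_1),(u_9,h_2)\}.
\]
One checks directly that $S$ is a double dominating set of $C_9\circ P_4$: the fibres over $u_1,u_2,u_6,u_7$ each see two vertices of $S$ in an adjacent fibre; inside $H_{u_3}$ and $H_{u_5}$ the pair $\{h_1,h_3\}$ dominates $P_4$ and the lone vertex $(u_4,h_1)$ supplies the second domination for every vertex of those fibres (including the members of $S$ themselves); and $H_{u_4},H_{u_8},H_{u_9}$ are easily verified. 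Moreover $|S\cap V(H_{u_i})|\le 2$ for all $i$, and a short feasibility check of the constraints of Lemma~\ref{lemma-conditions-proj} shows that no profile of total weight $8$ exists on $C_9$, so $\gamma_{\times 2}(C_9\circ P_4)=9$ and $S$ is a $\gamma_{\times 2}$-set satisfying Lemma~\ref{obs-vertice<=2}. Taking $w_i=u_i$ for $i=1,\dots,7$ gives $|S\cap W|=5$. The lemma as stated is therefore false, and no further information about $\gamma(H)$-sets will rescue your final sub-case.

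For comparison, the paper's own proof packages the count differently: it establishes $|S\cap(\{w_1,w_2,w_3\}\times V(H))|\ge 2$ and $|S\cap(\{w_4,\dots,w_7\}\times V(H))|\ge 3$, and then asserts that equality in the first forces the second to be at least $4$. That assertion fails on the very same profile $(0,0,2)$ followed by $(1,2,0,0)$, so the published argument has precisely the hole you isolated; your write-up is the more honest of the two in refusing to wave it away. The downstream statement, Lemma~\ref{LemmaDDS-CiclesxHGmmaH=2}, still appears to be true (e.g.\ $\gamma_{\times 2}(C_9\circ P_4)=9$ as above), but proving it requires either a global counting argument around the whole cycle or a characterization of the deficient windows (two dominating-pair fibres straddling a singleton, flanked by weight-$2$ fibres just outside the window) showing they cannot occur too often, rather than the window-by-window bound of $6$ claimed here.
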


\begin{proof}
By Lemma \ref{lemma-conditions-proj} (i) and (ii) we have that $|S\cap (\{w_1,w_2,w_3\}\times V(H))|\geq 2$ and $|S\cap (\{w_4,w_5,w_6,w_7\}\times V(H))|\geq 3$. If $|S\cap (\{w_1,w_2,w_3\}\times V(H))|\geq 3$, then we are done. Hence, we assume that $|S\cap (\{w_1,w_2,w_3\}\times V(H))|=2$. In this case, and by applying again Lemma \ref{lemma-conditions-proj} (i) and (ii) we deduce that $|S\cap (\{w_4,w_5,w_6,w_7\}\times V(H))|\geq 4$, which implies that $|S\cap W|\geq 6$, as desired. Therefore, the proof is complete.  
\end{proof}

\begin{lemma}\label{LemmaDDS-CiclesxHGmmaH=2}
For any integer $n\geq 3$ and any graph $H$ with $\gamma(H)=2$,

$$
\gamma_{\times 2}(C_n\circ H)\geq\left\{ \begin{array}{ll}
             n-\lfloor\frac{n}{7}\rfloor+1 & \text{if } \,  n\equiv 1,2 \pmod 7,\\[5pt]
             n-\lfloor\frac{n}{7}\rfloor & \mbox{otherwise.}
                                  \end{array}\right.
$$
\end{lemma}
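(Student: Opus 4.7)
The plan is to combine a sliding-window averaging argument (based on Lemma~\ref{lema-1}) with a finer remove-and-partition trick for the two residue classes $n\equiv 1,2\pmod 7$ where averaging falls short by exactly one.

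Fix a $\gamma_{\times 2}(C_n\circ H)$-set $S$ satisfying Lemma~\ref{obs-vertice<=2}, and put $s_j=|S\cap V(H_{u_j})|\in\{0,1,2\}$. Recall from Lemma~\ref{lemma-conditions-proj} the local constraints $s_{j-1}+s_{j+1}\geq 2$ whenever $s_j\leq 1$, and $s_{j-1}+s_{j+1}\geq 1$ whenever $s_j=2$ (all indices mod $n$). Assuming $n\geq 7$, each of the $n$ cyclic $P_7$-subpaths $u_i u_{i+1}\cdots u_{i+6}$ satisfies $s_i+s_{i+1}+\cdots+s_{i+6}\geq 6$ by Lemma~\ref{lema-1}. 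Summing over $i$, every $s_j$ is counted exactly seven times, so $7|S|\geq 6n$ and $|S|\geq\lceil 6n/7\rceil=n-\lfloor n/7\rfloor$; this is already the desired bound when $n\equiv 0,3,4,5,6\pmod 7$.

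The main obstacle is the unit gap when $n\equiv 1,2\pmod 7$. Write $n=7q+r$ with $r\in\{1,2\}$ and $q\geq 1$, and argue in two cases. If some $s_j=2$, I would delete $u_j$ from $C_n$ when $r=1$, and when $r=2$ invoke Lemma~\ref{lemma-conditions-proj}(ii) to select a neighbour $u_{j'}\in\{u_{j-1},u_{j+1}\}$ with $s_{j'}\geq 1$ and delete the adjacent pair $\{u_j,u_{j'}\}$. The remaining $7q$ vertices split into $q$ consecutive blocks of seven, each one still inducing a $P_7$ subgraph of $C_n$; Lemma~\ref{lema-1} contributes at least $6$ per block, so $|S|\geq 6q+s_j=6q+2$ when $r=1$ and $|S|\geq 6q+s_j+s_{j'}\geq 6q+3$ when $r=2$. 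Otherwise every $s_j\leq 1$: then $s_j=0$ would force $s_{j-1}=s_{j+1}=1$, and the constraint at $u_{j-1}$ would force $s_{j-2}\geq 2$, contradicting $s_{j-2}\leq 1$; hence every $s_j=1$ and $|S|=n=7q+r\geq 6q+r+1$ for $q\geq 1$, which is the required bound.

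Finally, the small cases $n\in\{3,4,5,6\}$, where no $P_7$-window is available, must be handled separately: $C_3=K_3$ is already covered by Proposition~\ref{prop-Kn-K1n}(i), and for $n\in\{4,5,6\}$ one can enumerate the finitely many admissible ternary sequences $(s_1,\ldots,s_n)$ under the local constraints above and verify the bound directly. I expect the remove-and-partition step to be the most delicate part, since one must verify simultaneously that every block of the resulting partition is a genuine $P_7$ subgraph of $C_n$ (so that Lemma~\ref{lema-1} applies) and that the alternative "all $s_j=1$" branch is truly forced by the local constraints rather than merely permitted.
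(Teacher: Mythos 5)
Your proof is correct, and it rests on the same key ingredient as the paper's --- Lemma~\ref{lema-1} applied to blocks of seven consecutive vertices --- but it closes the argument differently in two places. For the residues $n\equiv 0,3,4,5,6\pmod 7$ the paper partitions $V(C_n)$ explicitly into $q$ seven-blocks plus a remainder block anchored at a vertex of $\mathcal{A}_S$, and then does a small case analysis on $r\in\{3,4,5,6\}$ to bound the remainder; your averaging over all $n$ cyclic windows dispatches these residues in one line, which is cleaner. For $n\equiv 1,2\pmod 7$ the two arguments converge (anchor the leftover block at a vertex with $s_j=2$), but they diverge on how the existence of such a vertex is justified: the paper proves $\mathcal{A}_S\neq\emptyset$ by first invoking the upper bound $\gamma_{\times 2}(C_n\circ H)\le\gamma_{\times 2}(P_n\circ H)<n$ from Theorem~\ref{DoubleDominSpanningSubgraph} and Lemma~\ref{LemmeGammax2LexicographicPathasxH}, whereas you split into the case $\mathcal{A}_S=\emptyset$ and show directly from the local constraints of Lemma~\ref{lemma-conditions-proj} that then every $s_j=1$, so $|S|=n\geq 6q+r+1$. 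Your version is self-contained (it does not need the constructive upper bound on paths as an input to the lower bound), at the cost of one extra case. The remaining differences are cosmetic: both treatments of $n\in\{3,4,5,6\}$ amount to a finite check (the paper simply asserts it; your enumeration of admissible sequences $(s_1,\dots,s_n)$ under the constraints of Lemma~\ref{lemma-conditions-proj} does verify $|S|\geq n$), and your observation that the $q$ blocks obtained after deleting $u_j$ (or the adjacent pair $\{u_j,u_{j'}\}$) consist of consecutive vertices, hence genuinely contain $P_7$ as a subgraph of $C_n$, is exactly the point that makes Lemma~\ref{lema-1} applicable.
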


\begin{proof}
It is easy to check that $\gamma_{\times 2}(C_n\circ H)=n$ for every $n\in\{3,4,5,6\}$. Now, let $n=7q+r$, with $0\leq r\leq 6$ and $q\geq 1$. Let $S$ be a $\gamma_{\times 2}(C_n\circ H)$-set that satisfies Lemma \ref{obs-vertice<=2}. 

If $r=0$, then by Lemma \ref{lema-1} we have that $|S|\geq 6q=n-\lfloor\frac{n}{7}\rfloor.$  From now on we assume that $r\ge 1$. By Theorem \ref{DoubleDominSpanningSubgraph} and Lemma \ref{LemmeGammax2LexicographicPathasxH} we deduce that$\gamma_{\times 2}(C_n\circ H)\le \gamma_{\times 2}(P_n\circ H)<n$, which implies  that  $\mathcal{A}_S\ne  \emptyset$, otherwise there  exists $u\in V(C_n)$ such that  $N(u)\cap \mathcal{C}_S\ne \emptyset$ and so $|N(u)\cap \mathcal{B}_S|\le 1$, which is a contradiction. Let $x\in \mathcal{A}_S$ and, without loss of generality, we can label the vertices of $C_n$ in such a way that $x=u_{1}$, and  $u_{2}\in \mathcal{A}_S\cup \mathcal{B}_S$ whenever $r\ge 2$. We partition $V(C_n)$ into $X=\{u_1,\ldots  ,u_r\}$ and $Y=\{u_{r+1},\ldots ,u_n\}$. Notice that Lemma \ref{lema-1} leads to $|S\cap (Y\times V(H))|\ge 6q$.
 
Now, if $r\in \{1,2\}$, then $|S\cap (X\times V(H))|\ge r+1$, which implies that $|S|\ge r+1+6q=n-\lfloor\frac{n}{7}\rfloor+1$. Analogously, if $r=3$, then $|S\cap (X\times V(H))|\ge r$ and so 
  $|S|\ge r+6q=n-\lfloor\frac{n}{7}\rfloor$. 

Finally, if $r\in \{4,5,6\}$, then by Lemma \ref{lemma-conditions-proj} (i) and (ii) we deduce that $|S\cap (X\times V(H))|\ge r$, which implies that $|S|\ge r+6q=n-\lfloor\frac{n}{7}\rfloor$.
\end{proof}

The following result is a direct consequence of Theorem \ref{DoubleDominSpanningSubgraph} and Lemmas \ref{LemmeGammax2LexicographicPathasxH}  and \ref{LemmaDDS-CiclesxHGmmaH=2}.

\begin{proposition}\label{FormulaGammax2LexicographicPnxH=CnxH} For any integer $n\ge 3$ and any graph $H$ with $\gamma(H)=2$,
$$\gamma_{\times 2}(C_n\circ H)=\gamma_{\times 2}(P_n\circ H)= \left\{ \begin{array}{ll}
             n-\lfloor\frac{n}{7}\rfloor+1 & \text{if } \,  n\equiv 1,2 \pmod 7,\\[5pt]
             n-\lfloor\frac{n}{7}\rfloor & \mbox{otherwise.}
                                  \end{array}\right.$$
\end{proposition}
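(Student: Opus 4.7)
The plan is to observe that this proposition is essentially a corollary obtained by sandwiching. First, since $P_n$ is a spanning subgraph of $C_n$ for every $n\ge 3$ and neither graph has isolated vertices, the lexicographic product $P_n\circ H$ is a spanning subgraph of $C_n\circ H$, also without isolated vertices (as $H$ is nontrivial). Theorem~\ref{DoubleDominSpanningSubgraph} then yields the inequality
$$\gamma_{\times 2}(C_n\circ H)\;\le\;\gamma_{\times 2}(P_n\circ H).$$

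Next, I would invoke Lemma~\ref{LemmeGammax2LexicographicPathasxH}, which supplies the explicit upper bound
$$\gamma_{\times 2}(P_n\circ H)\;\le\; n-\left\lfloor\tfrac{n}{7}\right\rfloor + \varepsilon_n,$$
where $\varepsilon_n=1$ if $n\equiv 1,2\pmod 7$ and $\varepsilon_n=0$ otherwise. On the other hand, Lemma~\ref{LemmaDDS-CiclesxHGmmaH=2} furnishes exactly the matching lower bound
$$\gamma_{\times 2}(C_n\circ H)\;\ge\; n-\left\lfloor\tfrac{n}{7}\right\rfloor + \varepsilon_n.$$

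Chaining these three facts gives
$$n-\left\lfloor\tfrac{n}{7}\right\rfloor+\varepsilon_n \;\le\; \gamma_{\times 2}(C_n\circ H)\;\le\;\gamma_{\times 2}(P_n\circ H)\;\le\; n-\left\lfloor\tfrac{n}{7}\right\rfloor+\varepsilon_n,$$
so all three quantities coincide and equal the stated piecewise expression, completing the argument. There is essentially no obstacle here, since all the real work has been carried out in the two preceding lemmas; the only thing to double-check is that the upper-bound and lower-bound formulas in the two lemmas are written in the same form (they are, including the $n\equiv 1,2\pmod 7$ case distinction), and that the hypothesis $\gamma(H)=2$ is common to both lemmas. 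The proof will therefore be a brief two- or three-line citation of these ingredients.
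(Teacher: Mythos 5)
Your proposal is correct and is exactly the paper's argument: the paper states this proposition as a direct consequence of Theorem~\ref{DoubleDominSpanningSubgraph}, Lemma~\ref{LemmeGammax2LexicographicPathasxH} and Lemma~\ref{LemmaDDS-CiclesxHGmmaH=2}, which is precisely the sandwich you describe.
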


 \subsection{Cases where $\gamma(H)\geq 3$}

To begin this subsection we need to recall the following well-known result. 

\begin{remark} {\rm \cite{book-total-dom} }\label{TotalDominationCyclesPaths}
For any integer $n\ge 3$,
$$\gamma_t(P_n)=\gamma_t(C_n)=
\left\{ \begin{array}{ll}
\frac{n}{2} & if \, n\equiv 0\pmod 4,\\[5pt]
\frac{n+1}{2} & if \,  n\equiv 1,3\pmod 4,\\[5pt]
\frac{n}{2}+1  & if \,  n\equiv 2\pmod 4.
\end{array}\right. $$
\end{remark}

\begin{lemma}\label{lem-new-new}
Let $P_n=u_1u_2\ldots u_n$ be a path of order $n\geq 6$, where consecutive vertices are adjacent, and let $H$ be a graph. If $\gamma(H)\geq 3$, then there exists a $\gamma_{\times 2}(P_n\circ H)$-set $S$ such that $u_n, u_{n-3}\in \mathcal{C}_S$ and $u_{n-1}, u_{n-2}\in \mathcal{A}_S$. 
\end{lemma}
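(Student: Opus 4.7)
The plan is to project a $\gamma_{\times 2}(P_n\circ H)$-set onto a function on $P_n$, rearrange that function near the tail to force the pattern $(\ldots,0,2,2,0)$ at the last four positions, and then lift it back to a double dominating set. Because $\gamma(H)\ge 3$, Lemma~\ref{lemma-conditions-proj}(iii) controls both directions: it forces the projection to obey strong local lower bounds, and the analogous condition on the rearranged function will guarantee that the lift is a double dominating set.

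Concretely, I first take a $\gamma_{\times 2}(P_n\circ H)$-set $S$ satisfying Lemma~\ref{obs-vertice<=2} and define $f(u_i)=|S\cap V(H_{u_i})|$. Lemma~\ref{lemma-conditions-proj}(iii) applied at $x=u_n,\,u_{n-1},\,u_{n-4}$ gives
\[
f(u_{n-1})=2,\qquad f(u_{n-2})+f(u_n)\ge 2,\qquad f(u_{n-5})+f(u_{n-3})\ge 2.
\]
I then define a rearranged function $g$ by setting $g(u_n)=g(u_{n-3})=0$, $g(u_{n-1})=g(u_{n-2})=g(u_{n-5})=2$, and $g(u_i)=f(u_i)$ elsewhere. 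Adding the last two displayed inequalities yields
\[
\omega(g)-\omega(f)=4-f(u_n)-f(u_{n-2})-f(u_{n-3})-f(u_{n-5})\le 0,
\]
and a short direct check confirms that $g(N(u_i))\ge 2$ for every $u_i\in V(P_n)$: the tail constraints hold by construction, the constraints at $u_{n-4}$ and $u_{n-6}$ are covered by the upgrade $g(u_{n-5})=2$, and the constraints away from the modified block coincide with those for $f$.

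Finally, I lift $g$ by picking any $g(u_i)$ vertices of $V(H_{u_i})$ for every $u_i$ to obtain a set $S^*\subseteq V(P_n\circ H)$. For any $(u_i,v)\in V(P_n\circ H)$, every element of $V(H_{u_j})\cap S^*$ with $u_j\in N(u_i)$ is adjacent to $(u_i,v)$ in $P_n\circ H$, so $(u_i,v)$ has at least $g(N(u_i))\ge 2$ neighbours in $S^*$, making $S^*$ a double dominating set. Combined with $|S^*|=\omega(g)\le\omega(f)=\gamma_{\times 2}(P_n\circ H)$, this forces equality, and by construction $u_n,u_{n-3}\in\mathcal{C}_{S^*}$ and $u_{n-1},u_{n-2}\in\mathcal{A}_{S^*}$.

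The step I expect to require the most care is verifying $g(N(x))\ge 2$ at every vertex, especially in the small cases $n\in\{6,7\}$ where $u_{n-5}$ or $u_{n-6}$ may coincide with a leaf of $P_n$; each such boundary check reduces to the corresponding inequality already forced on $f$ by Lemma~\ref{lemma-conditions-proj}(iii).
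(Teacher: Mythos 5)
Your proof is correct and follows essentially the same strategy as the paper's: both rearrange an optimal set near the tail of the path, using the local inequalities of Lemma~\ref{lemma-conditions-proj}(iii) together with Lemma~\ref{obs-vertice<=2} to force the pattern $\mathcal{C}_S,\mathcal{A}_S,\mathcal{A}_S,\mathcal{C}_S$ on $u_{n-3},u_{n-2},u_{n-1},u_n$. The paper achieves this by choosing $S$ with $|\mathcal{A}_S|$ maximum and performing two ad hoc replacements, whereas your explicit project--rearrange--lift bookkeeping with the weight comparison $\omega(g)\le\omega(f)$ carries out the same moves in one step and makes the ``replace the vertices of $S\cap V(H_{u_{n-3}})$'' step fully precise.
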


\begin{proof}
Let $S$ be a $\gamma_{\times 2}(P_n\circ H)$-set  that satisfies Lemma \ref{obs-vertice<=2} such that $|\mathcal{A}_S|$ is maximum. First, we observe that $u_{n-1}\in \mathcal{A}_S$ by Lemma \ref{lemma-conditions-proj}. Now, by applying again Lemma \ref{lemma-conditions-proj}, we have that $|S\cap V(H_{u_n})|+|S\cap V(H_{u_{n-2}})|\geq 2$. Hence, without loss of generality we can assume that $u_{n-2}\in \mathcal{A}_S$ and $u_n\in \mathcal{C}_S$ as $|\mathcal{A}_S|$ is maximum. If $u_{n-3}\in \mathcal{C}_S$, then we are done. On the other hand, if $u_{n-3}\notin \mathcal{C}_S$, then as every vertex of $V(H_{u_{n-3}})$ has two neighbours in $S\cap V(H_{u_{n-2}})$, we can redefine $S$ by replacing the vertices in $S\cap V(H_{u_{n-3}})$ with vertices in  $V(H_{u_{n-4}})\cup V(H_{u_{n-5}})$ and obtain a new $\gamma_{\times 2}(P_n\circ H)$-set $S$ satisfying that $u_{n-3}\in \mathcal{C}_S$, as desired. Therefore, the result follows. 
\end{proof}

\begin{proposition}\label{teo-Pn-gamma>=3}
Let $n\geq 3$ be an integer and let $H$ be a graph. If $\gamma(H)\geq 3$, then 
$$\gamma_{\times 2}(P_n\circ H)=2\gamma_t(P_n)=\left\{ \begin{array}{ll}
n & if \, n\equiv 0\pmod 4,\\[5pt]
n+1 & if \,  n\equiv 1,3\pmod 4,\\[5pt]
n+2  & if \,  n\equiv 2\pmod 4.
\end{array}\right. $$
\end{proposition}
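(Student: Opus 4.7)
The plan is first to read off the closed-form numerical values from Remark~\ref{TotalDominationCyclesPaths} (doubling), then to verify the equality $\gamma_{\times 2}(P_n\circ H)=2\gamma_t(P_n)$. The upper bound here is immediate from Theorem~\ref{teo-bounds-x2}, so the real work is the matching lower bound, which I would prove by induction on $n$ by peeling off four consecutive vertices at each step using Lemma~\ref{lem-new-new}.

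The common setup for every $n\ge 3$ is as follows: take a $\gamma_{\times 2}(P_n\circ H)$-set $S$ satisfying Lemma~\ref{obs-vertice<=2} and define $g(u_i)=|S\cap V(H_{u_i})|\in\{0,1,2\}$. Since $\gamma(H)\ge 3$, Lemma~\ref{lemma-conditions-proj}(iii) gives $g(N(x))\ge 2$ for every $x\in V(P_n)$; this in particular forces $g(u_2)=g(u_{n-1})=2$ and $g(u_{i-1})+g(u_{i+1})\ge 2$ for all $2\le i\le n-1$. The base cases $n\in\{3,4,5\}$ are then a brief direct case analysis of these linear inequalities, yielding $\omega(g)\ge 4,\,4,\,6$ respectively, i.e.\ $|S|\ge 2\gamma_t(P_n)$.

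For the inductive step $n\ge 6$, I would invoke Lemma~\ref{lem-new-new} to choose $S$ so that $u_n,u_{n-3}\in\mathcal{C}_S$ and $u_{n-1},u_{n-2}\in\mathcal{A}_S$; Lemma~\ref{obs-vertice<=2} then pins down $|S\cap V(H_{u_n})|=|S\cap V(H_{u_{n-3}})|=0$ and $|S\cap V(H_{u_{n-1}})|=|S\cap V(H_{u_{n-2}})|=2$. Setting $S'=S\cap(\{u_1,\dots,u_{n-4}\}\times V(H))$ and noting that $u_{n-3}\in\mathcal{C}_S$ means that the only neighbours of any $(u_{n-4},v)$ lying outside the subproduct are in $\{u_{n-3}\}\times V(H)$ and thus disjoint from $S$, one sees that $S'$ is a double dominating set of $P_{n-4}\circ H$, where $P_{n-4}$ is the subpath $u_1\ldots u_{n-4}$. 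Hence $|S'|\ge \gamma_{\times 2}(P_{n-4}\circ H)\ge 2\gamma_t(P_{n-4})$ by the induction hypothesis, and combining $|S|=|S'|+4$ with the identity $\gamma_t(P_n)=\gamma_t(P_{n-4})+2$ (a one-line check on $n\bmod 4$ via Remark~\ref{TotalDominationCyclesPaths}) closes the induction.

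The one subtlety I anticipate is the boundary case $n=6$, where the induction appeals to the value of $\gamma_{\times 2}(P_2\circ H)$, which falls outside the inductive hypothesis and is not covered by Remark~\ref{TotalDominationCyclesPaths}. I would treat this separately by verifying that $\gamma_{\times 2}(P_2\circ H)=4$ whenever $\gamma(H)\ge 3$: the upper bound $2\gamma_t(P_2)=4$ is Theorem~\ref{teo-bounds-x2}, while Theorems~\ref{teo-char-st=2} and \ref{teo-char-st=3} rule out the values $2$ and $3$ (the only clause of Theorem~\ref{teo-char-st=3} compatible with $G\cong P_2$ is (i), which requires $\gamma(H)=2$).
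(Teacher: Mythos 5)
Your proposal is correct and follows essentially the same route as the paper: the upper bound comes from the general $2\gamma_t$ bound, and the lower bound is proved by induction on $n$, using Lemma~\ref{lem-new-new} to peel off the last four copies of $H$ so that they contribute exactly $4$ to $|S|$ and the remainder double dominates $P_{n-4}\circ H$. Your explicit handling of the $n=6$ step --- where the induction needs $\gamma_{\times 2}(P_2\circ H)\ge 4=2\gamma_t(P_2)$, a value not covered by the paper's base cases $n\in\{3,4,5\}$ and which you correctly extract from Theorems~\ref{teo-char-st=2} and~\ref{teo-char-st=3} --- is a small but genuine tightening of the published argument.
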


\begin{proof}
Since Proposition \ref{prop-inequalities} leads to $\gamma_{\times 2}(P_n\circ H)\leq 2\gamma_t(P_n)$, we only need to prove that $\gamma_{\times 2}(P_n\circ H)\geq 2\gamma_t(P_n)$. We proceed by induction on $n$. By Propositions \ref{prop-Kn-K1n} and \ref{prop-Snm-Knm} we obtain that $\gamma_{\times 2}(P_n\circ H)=2\gamma_t(P_n)$ for $n=3,4$. By Lemma \ref{lemma-conditions-proj} it is easy to see that $\gamma_{\times 2}(P_5\circ H)=2\gamma_t(P_5)$. This establishes the base case. Now, we assume that $n\geq 6$ and that $\gamma_{\times 2}(P_k\circ H)\geq 2\gamma_t(P_k)$ for $k<n$. Let $S$ be a $\gamma_{\times 2}(P_n\circ H)$-set  that satisfies Lemma \ref{lem-new-new}. Let $D=V(P_n\circ H)\setminus (\cup_{i=0}^3 V(H_{u_{n-i}}))$. Notice that $S\cap D$ is a double dominating set of $(P_n\circ H)-D\cong P_{n-4}\circ H$. Hence,  by applying the induction hypothesis,
$$\gamma_{\times 2}(P_n\circ H)\geq \gamma_{\times 2}(P_{n-4}\circ H)+4\geq 2\gamma_t(P_{n-4})+4\geq 2\gamma_t(P_n),$$
as desired. To conclude the proof we apply Remark  \ref{TotalDominationCyclesPaths}.
\end{proof}

\begin{proposition}\label{teo-Cn-gamma>=3}
Let $n\geq 3$ be an integer and let $H$ be a graph. If 
$\gamma(H)\geq 3$, then
$$\gamma_{\times 2}(C_n\circ H)=n.$$
\end{proposition}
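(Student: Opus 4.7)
The plan is to prove $\gamma_{\times 2}(C_n\circ H)=n$ via a matching pair of bounds, neither of which is hard given the tools already developed.

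For the upper bound $\gamma_{\times 2}(C_n\circ H)\le n$, I would exhibit an explicit double dominating set. Fix any vertex $v_0\in V(H)$ and let
\[
S=V(C_n)\times\{v_0\}=\{(u_1,v_0),(u_2,v_0),\dots,(u_n,v_0)\}.
\]
Clearly $|S|=n$. For any vertex $(u_i,v)\in V(C_n\circ H)$, the two vertices $(u_{i-1},v_0)$ and $(u_{i+1},v_0)$ belong to $S$ and are adjacent to $(u_i,v)$ in $C_n\circ H$ by the definition of the lexicographic product, and these are distinct because $n\ge 3$. Hence $|N[(u_i,v)]\cap S|\ge 2$, so $S$ is a double dominating set.

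For the lower bound $\gamma_{\times 2}(C_n\circ H)\ge n$, let $S$ be a $\gamma_{\times 2}(C_n\circ H)$-set that satisfies Lemma \ref{obs-vertice<=2}, and set $a_i=|S\cap V(H_{u_i})|$ for $i\in\{1,\dots,n\}$. Since $\gamma(H)\ge 3$, Lemma \ref{lemma-conditions-proj}(iii) applied to $x=u_i$ gives
\[
a_{i-1}+a_{i+1}=\sum_{u\in N(u_i)}|S\cap V(H_u)|\ge 2
\]
for every $i$, with indices taken modulo $n$. Summing over $i\in\{1,\dots,n\}$,
\[
2|S|=2\sum_{i=1}^{n}a_i=\sum_{i=1}^{n}(a_{i-1}+a_{i+1})\ge 2n,
\]
so $|S|\ge n$, as needed.

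There is no serious obstacle here: the upper bound is a one-line construction using a single vertex of $H$ repeated along the cycle, and the lower bound is an immediate averaging argument over the $n$ neighbourhood inequalities provided by Lemma \ref{lemma-conditions-proj}(iii). The only subtlety worth stating is that the cyclic symmetry of $C_n$ is what makes each $a_j$ appear exactly twice in the sum, so no boundary corrections are needed (which explains why the path case in Proposition \ref{teo-Pn-gamma>=3} required a more delicate induction while the cycle case collapses to this clean equality).
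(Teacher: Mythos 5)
Your proof is correct and follows essentially the same route as the paper: the lower bound is the identical double-counting of $\sum_{x\in V(C_n)}\sum_{u\in N(x)}|S\cap V(H_u)|$ via Lemma \ref{lemma-conditions-proj}(iii) together with Lemma \ref{obs-vertice<=2}. The only difference is cosmetic: for the upper bound the paper cites Theorem \ref{teo-bounds-x2-2,t}(ii), whereas you exhibit the explicit set $V(C_n)\times\{v_0\}$ directly, which is exactly the construction underlying that theorem.
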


\begin{proof}
From Theorem \ref{teo-bounds-x2-2,t} we know  that $\gamma_{\times 2}(C_n\circ H)\leq n$. We only need to prove that $\gamma_{\times 2}(C_n\circ H)\geq n$. Let $S$ be a $\gamma_{\times 2}(G\circ H)$-set that satisfies Lemma \ref{obs-vertice<=2}. Since $\gamma(H)\ge 3$, by Lemma \ref{lemma-conditions-proj} (iii) we deduce that
 $$2 \gamma_{\times 2}(C_n\circ H)=2|S|=\displaystyle\sum_{x\in V(C_n)}\sum_{u\in N(x)}|S\cap V(H_{u})|\geq 2n.$$
Therefore, the result follows.
\end{proof}


\begin{thebibliography}{00}

\bibitem{AbdollahzadehAhangarHenningSamodivkinEtAl2016} H. Abdollahzadeh Ahangar, M. A. Henning, V. Samodivkin,  I. G. Yero, Total Roman domination in graphs, \emph{Appl. Anal. Discrete Math.} \textbf{10} (2016) 501--517.

\bibitem{TRDF2019} J. Amjadi, S. M. Sheikholeslami, M. Soroudi,  On the total Roman domination in trees, \emph{Discuss. Math. Graph Theory} \textbf{39} (2019) 519--532.

\bibitem{MR3200151}
B.~H. Arriola, S.~R. Canoy, Jr., Doubly connected domination in the corona and
  lexicographic product of graphs,  \emph{Appl. Math. Sci.}  \textbf{8}~(29-32) (2014)
  1521--1533.
  
\bibitem{chellali2006} M. Blidia, M. Chellali, T. W. Haynes, M. A. Henning, Independent and double domination in trees, \emph{Util. Math.} \textbf{70} (2006) 159--173.


\bibitem{MR3897459} S. Brezovnik, T.K.  \v Sumenjak, Complexity of k-rainbow independent domination and some results on the lexicographic product of graphs.\emph{ Appl. Math. Comput.} \textbf{349} (2019)  214--220.

\bibitem{TR2DF2019} S. Cabrera Garc\'ia, A. Cabrera Mart\'inez,  F. A. Hern\'andez Mira, I.G. Yero, Total Roman $\{2\}$-domination in graphs, \emph{Quaestiones Mathematicae}. In press with DOI: 10.2989/16073606.2019.1695230

\bibitem{Cabrera2020} A. Cabrera Mart\'inez, S. Cabrera Garc\'ia, A. Carri\'on Garc\'ia, Further results on the total Roman domination in graphs, \emph{Mathematics} \textbf{8(3)} (2020) 349. 

\bibitem{TWRDF(LEX)2019} A. Cabrera Mart\'inez and J.A. Rodr\'iguez-Vel\'azquez, Total protection of lexicographic product graphs. Submitted. 

\bibitem{Dorota2018} N. Campanelli, D. Kuziak, Total Roman domination in the lexicographic product of graphs, \emph{Discrete Appl. Math.} \textbf{263} (2019) 88--95.




\bibitem{chellali2005} M. Chellali, T. Haynes, On paired and double domination in graphs, \textit{Util. Math.} \textbf{67} (2005) 161--171.

\bibitem{Cockayne1980}
E.~J. Cockayne, R.~M. Dawes, S.~T. Hedetniemi, Total domination in graphs,   \emph{Networks} \textbf{10}~(3) (1980) 211--219.

\bibitem{Cuivillas2014}A. M. Cuivillas,   S. R., Jr. Canoy,  Double domination in graphs under some binary operations,  \emph{Appl. Math. Sci.}  \textbf{8} (2014)  2015--2024.


\bibitem{Dettlaff-LemanskaRodrZuazua2017}
M.~{Dettlaff}, M.~{Lema{\'n}ska}, J.~A. {Rodr{\'{\i}}guez-Vel{\'a}zquez},
  R.~{Zuazua}, {On the super domination number of lexicographic product
  graphs}, \emph{Discrete Appl. Math.} \textbf{263} (2019) 118--129.

\bibitem{MR2399365} P. Dorbec,  M. Mollard,  S. Klav\v{z}ar,  S. \v{S}pacapan, Power domination in product graphs, 
\emph{SIAM J. Discrete Math.} \textbf{22} (2) (2008) 554--567.

\bibitem{Majid2019} M. Hajiana, N. J. Rad, A new lower bound on the double domination number of a graph, \emph{Discrete Appl. Math}. \textbf{254} (2019) 280--282.


\bibitem{Hammack2011}
R. Hammack, W. Imrich, S. Klavzar, Handbook of product graphs, Discrete Mathematics and its Applications, 2nd ed., CRC Press, 2011.


\bibitem{harant2005} J. Harant,  M. A. Henning, On double domination in graphs, \emph{Discuss. Math. Graph Theory} \textbf{25} (2005) 29--34.


\bibitem{haynes2000} F. Harary, T.W. Haynes, Double domination in graphs, \emph{Ars. Combin.} \textbf{55} (2000) 201--213.

\bibitem{book1} T. Haynes, S. Hedetniemi, P. Slater, Domination in Graphs: Volume 2: Advanced Topics, Chapman \& Hall/CRC Pure and Applied Mathematics, Taylor \& Francis, 1998.

\bibitem{book2} T. W. Haynes, S. T. Hedetniemi, P. J. Slater, Fundamentals of Domination in Graphs, Chapman and Hall/CRC Pure and Applied Mathematics Series, Marcel Dekker, Inc. New York, 1998.


\bibitem{book-total-dom} M.A. Henning, A. Yeo, Total domination in graphs. Springer, New York, USA, 2013.

\bibitem{Imrich2000}
W. Imrich, S. Klavzar, Product graphs, structure and recognition,
Wiley-Interscience series in discrete mathematics and optimization, Wiley, 2000.

\bibitem{Khelifi2012} S.  Khelifi,  M. Chellali,  Double domination critical and stable graphs upon vertex removal,  \emph{Discuss. Math. Graph Theory} \textbf{32} (4) (2012)  643--657. 



\bibitem{Liu2013} C.-H. Liu, G. J. Chang, Roman domination on strongly chordal graphs.\textit{ J. Comb. Optim.} \textbf{26} (2013)  608--619.



\bibitem{MR3363260}
J.~Liu, X.~Zhang, J.~Meng, Domination in lexicographic product digraphs, \emph{Ars
  Combin.} \textbf{120} (2015) 23--32.

\bibitem{MR3644816} H. Ma, J. Liu,  The twin domination number of lexicographic product of digraphs. \emph{J. Nat. Sci. Hunan Norm. Univ.} \textbf{39} (6) (2016) 80--84.

\bibitem{Nowakowski1996}
R.~J. Nowakowski, D.~F. Rall, Associative graph products and their
  independence, domination and coloring numbers, \emph{Discuss. Math.
  Graph Theory} \textbf{16} (1996) 53--79.

\bibitem{SUmenjak:2012:RDL:2263360.2264103} T. K. \u Sumenjak, P. Pavli\u c,  A. Tepeh, On the Roman domination in the lexicographic product of graphs, \textit{Discrete Appl. Math.}  \textbf{160} (13) (2012) 2030--2036.


\bibitem{MR3057019}
T.~K. {\v{S}}umenjak, D.~F. Rall, A.~Tepeh, Rainbow domination in the
  lexicographic product of graphs, \emph{Discrete Appl.
  Math.} \textbf{161}~(13-14) (2013)
  2133--2141.
  
\bibitem{Valveny2017}
M.~Valveny, H.~P\'erez-Ros\'es, J.~A. Rodr\'iguez-Vel\'azquez, {On the weak Roman domination number of lexicographic product graphs}, \emph{Discrete Appl. Math.} \textbf{263} (2019) 257--270.

\bibitem{Zhang2011} X. Zhang, J. Liu,  J. Meng, Domination in lexicographic product graphs, \textit{Ars Combin.} \textbf{101}  (2011) 251--256.

\end{thebibliography}
\end{document}